\numberwithin{equation}{section}
\newtheorem{theorem}{Theorem}[section]
\newtheorem{proposition}[theorem]{Proposition}
\newtheorem{lemma}[theorem]{Lemma}
\newtheorem{remark}[theorem]{Remark}
\theoremstyle{definition}
\newcommand{\R}{{\mathbb R}}
\newcommand{\dvg}{{\rm div}}
\newcommand{\iu}{{\rm i}}
\newcommand{\eps}{\varepsilon}
\def\na{D}
\def\a{\alpha}
\title[Uniqueness of ground states for quasi-linear equations]{Uniqueness of ground states for \\ a class of quasi-linear elliptic equations}
\author[F.~Gladiali]{Francesca Gladiali}
\author[M.~Squassina]{Marco Squassina}
\address{Universit\`a degli Studi di Sassari
\newline\indent
Via Piandana 4, I-07100 Sassari, Italy}
\email{fgladiali@uniss.it}
\address{Universit\`a degli Studi di Verona
\newline\indent
Strada Le Grazie 15, I-37134 Verona, Italy}
\email{marco.squassina@univr.it}
\thanks{The second author supported by 2009: {\em ``Variational and Topological
Methods in the Study of Nonlinear Phenomena''}}
\subjclass[2000]{35D99, 35J62, 58E05, 74G30}
\keywords{Quasi-linear Schr\"odinger equations; uniqueness of ground states}
\begin{document}

\begin{abstract}
We prove the uniqueness of radial positive solutions to a class of quasi-linear elliptic problems
containing in particular the quasi-linear Schr\"odinger equation.
\end{abstract}

\maketitle

\section{Introduction and main results}

Let $N\geq 3$ and $1<p<(N+2)/(N-2)$. As know,
the uniqueness \cite{Kwong}, up to translations, of the ground state solutions
of the Schr\"odinger equation ${\rm i}\psi_t+\Delta\psi+|\psi|^{p-1}\psi=0$ in $\R^N$
plays an important r\v ole in the study of its dynamical features such as
orbital stability \cite{cazlio} and soliton dynamics \cite{brosjerr}.
Recently, many contributions were devoted to the physically relevant quasi-linear
Schr\"odinger equations, for $1<p<(3N+2)/(N-2)$,
\begin{equation}
   \label{eq.schr1}
 {\rm i}\psi_t+\Delta\psi+\psi\Delta|\psi|^2+|\psi|^{p-1}\psi=0  \quad\text{in $(0,\infty)\times \R^N$}.
\end{equation}
We refer the reader to papers \cite{AR,CJS,CLW,LiWaWa1,LiWaWa2,LiWaWa3,LiWa,poppenberg2,ruizsic}
 and to the references therein both for mathematical results and physical background
of these equations. Despite they admit various physical applications, only a few rigorous mathematical studies have been
carried out in the last decade. Especially in connections with the stability issues investigated in \cite{CJS,CLW},
the problem of obtaining a uniqueness result for the ground state solutions of  \eqref{eq.schr1} is of particular relevance.
To the authors's knowledge, two contributions have been addressed to this issue so far, namely \cite{watanabe,selvitella}.
In \cite{selvitella}, the uniqueness is obtained in
the restricted range $1<p<(N+2)/(N-2)$ and the result is perturbative in nature, namely it is obtained when the term $\psi\Delta|\psi|^2$
in \eqref{eq.schr1} is replaced by $\lambda\psi\Delta|\psi|^2$ for values of $\lambda>0$ sufficiently small.
In \cite{watanabe}, the authors get uniqueness of a class of ground states of \eqref{eq.schr1} in the range $1<p<(3N+2)/(N-2)$.
If $a:\R\to\R$ is the function defined by $a(s)=2s^2 +1$, then equation \eqref{eq.schr1} rewrites as
\begin{equation}
\label{schrod}
{\rm i}\psi_t+\dvg(a(\psi)D\psi)-\frac{1}{2}a'(\psi)|D\psi|^2+|\psi|^{p-1}\psi=0,  \quad \text{in $(0,\infty)\times \R^N$}.
\end{equation}
The goal of this manuscript is to get, for a suitable class of functions $a\in C^2(\R)$, the uniqueness of radial standing waves 
$\psi(x,t)=e^{\iu mt}u(x)$, $u>0$, of \eqref{schrod}, yielding to
the quasi-linear elliptic problem in $\R^N$
\begin{equation}
\label{problema0}
-\dvg(a(u)Du)+\frac{1}{2}a'(u)|Du|^2+mu=u^p,  \quad \text{in $\R^N$}.
\end{equation}
Physically speaking, the value of the coefficient $m>0$ of the linear term has to be interpreted as a frequency.
It is readily seen by Sobolev embedding that, if the function $a$ behaves like the power $|s|^k$ at infinity, then 
$1<p<((k+1)N+2)/(N-2)$ is the optimal range for existence of nontrivial solutions of~\eqref{problema0} in 
\begin{equation}
	\label{Xdef}
X:=\big\{u\in H^1(\R^N):\, a(u) |Du|^2\in L^1(\R^N)\big\}.
\end{equation}
For the existence of ground state solutions to \eqref{problema0} 
in the polynomial quadratic case, we refer to \cite{ruizsic,CJS}.
%Referring to solutions in the forthcoming results, we shall always mean solutions $u\in C^2(\R^N)\cap X$.
\vskip5pt
\noindent
The main results of the paper are the following

\begin{theorem}
	\label{main}
	Let $N\geq 3$, $a_1>0$ and $\psi\in C^2(\R)$. Assume that there exist $2<k<2p$ such that 
	\begin{equation*}
	1<p<\frac{(k+1)N+2}{N-2},
	\end{equation*}
	and $a(s)=a_1|s|^k+\psi(s)$, with
	\begin{equation}
		\label{general-ip-1}	
		\inf_{s\geq 0}\psi(s)>0,\qquad
		\inf_{s\geq 0} \big(k\psi(s)-s\psi'(s)\big)\geq 0.
	\end{equation}
	Furthermore, assume that
	\begin{equation}
		\label{k-mag-2}	
		\lim_{s\to+\infty}s^{\frac{2-k}{2}}\psi(s)=0,\qquad
		\lim_{s\to+\infty}s^{\frac{4-k}{2}}\psi'(s)=0,\qquad
		\lim_{s\to+\infty}s^{\frac{6-k}{2}}\psi''(s)=0.
%		\sup_{s\geq \rho} \big(|\psi(s)|+s |\psi'(s)|+s^2 |\psi''(s)|\big)<+\infty.
	\end{equation}
	Then there exists $m_0>0$ such that the problem
	\begin{equation}
	\begin{cases}
	\label{problema}
	-\dvg(a(u)Du)+\frac{1}{2}a'(u)|Du|^2+mu=u^p,  & \text{in $\R^N$}, \\
	\noalign{\vskip2pt}
	 \,\,\text{$u>0$,}          & \text{in $\R^N$}, \\
	\noalign{\vskip2pt}
	 \,\,\text{$u$ is radially symmetric,}          &
	\end{cases}
	\end{equation}
	admits a unique solution $u\in X\cap C^2(\R^N)$, up to translations, for all $m\geq m_0$.
\end{theorem}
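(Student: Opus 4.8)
The plan is to strip the quasi-linear character of \eqref{problema} by a change of variables, reduce to a semi-linear ground state problem, and then exploit the large frequency regime $m\to+\infty$ to compare with an autonomous limiting profile whose ground state is unique and non-degenerate.

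First I would introduce the diffeomorphism associated with $a$. Since $a(0)=\psi(0)>0$ by \eqref{general-ip-1} and $a\in C^2$ is positive, I set $g(t)=\int_0^t\sqrt{a(\tau)}\,d\tau$ and $f=g^{-1}$, so that $f'(s)=1/\sqrt{a(f(s))}$. Writing $u=f(v)$ one has $a(u)|Du|^2=|Dv|^2$, and a direct computation shows that $u\in X\cap C^2(\R^N)$ is a radial positive solution of \eqref{problema} if and only if $v=g(u)$ is a radial positive solution, decaying at infinity, of the semi-linear equation
\begin{equation*}
-\Delta v+m\,f(v)f'(v)=f(v)^pf'(v)\quad\text{in }\R^N.
\end{equation*}
Because $a(0)>0$, the nonlinearity $h_m(v):=f(v)^pf'(v)-m\,f(v)f'(v)$ is regular at $v=0$, behaving like $C_2v^p-C_1 m v$ there; hence $v$ is strictly decreasing, exponentially small at infinity, and classical. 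Thus uniqueness for \eqref{problema} is equivalent to uniqueness of the radial ground state of $-\Delta v=h_m(v)$.

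Next I would analyse the large-$m$ asymptotics by rescaling. Since $a(s)=a_1|s|^k+\psi(s)$ with $\psi$ of lower order at infinity, I set $u(x)=m^{1/(p-1)}W(m^{\delta}x)$ with $\delta=(p-k-1)/(2(p-1))$, the exponents being fixed by balancing the leading power part of $-\dvg(a_1u^kDu)+\tfrac{k}{2}a_1u^{k-1}|Du|^2$, the linear term $mu$, and $u^p$. As $m\to+\infty$ the amplitude $m^{1/(p-1)}\to+\infty$, so the solution lives in the region where $a(u)\simeq a_1u^k$; the decay conditions \eqref{k-mag-2} on $\psi,\psi',\psi''$ guarantee that, in the rescaled equation, the contribution of $\psi$ and of its derivatives is $o(1)$ relative to the power terms. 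Establishing uniform a priori bounds and exponential decay for the rescaled family $\{W_m\}$, I would extract a $C^2_{loc}$ limit $W_\infty$ solving the autonomous quasi-linear problem
\begin{equation*}
-\dvg(a_1W^kDW)+\tfrac{k}{2}a_1W^{k-1}|DW|^2+W=W^p\quad\text{in }\R^N.
\end{equation*}
Applying the diffeomorphism above to $a_1|s|^k$ transforms this into the autonomous semi-linear equation $-\Delta v+A\,v^{(2-k)/(k+2)}=B\,v^{(2p-k)/(k+2)}$, whose positive radial ground state is unique, up to translations, by an ODE uniqueness argument of Kwong--McLeod--Serrin type; I would also record the non-degeneracy of $W_\infty$, i.e. that the linearized operator has trivial kernel among radial functions.

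The main obstacle is the passage from the limit to the whole range $m\ge m_0$. I would argue by contradiction: if \eqref{problema} admitted two distinct radial solutions for a sequence $m_n\to+\infty$, then rescaling both and using the compactness and convergence above, both would converge to the unique limit $W_\infty$; the non-degeneracy of $W_\infty$, together with the sign and monotonicity structure of $h_m$ for large $m$ (ensured again by \eqref{general-ip-1} and \eqref{k-mag-2}), would then force the two solutions to coincide for $n$ large, a contradiction. The delicate points are precisely the verification that the structural monotonicity condition $v\mapsto vh_m'(v)/h_m(v)$ required by the semi-linear uniqueness theorem survives the perturbation $\psi$ for $m$ large, and the non-degeneracy of the limiting quasi-linear ground state, on which the whole perturbation scheme rests.
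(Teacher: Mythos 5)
Your opening step---the change of unknown $u=g(v)$ with $g'=1/\sqrt{a(g)}$, reducing \eqref{problema} to the semi-linear radial problem $-\Delta v=h_m(v)$---is exactly the paper's Section 2. From there, however, you branch into a perturbative scheme (rescale by $m^{1/(p-1)}$, pass to an autonomous limit, invoke uniqueness \emph{and} non-degeneracy of the limit profile, conclude by contradiction), and this is where the proposal has genuine gaps rather than merely unchecked details. First, for $k>2$ the dual of the limiting pure-power problem is, as you yourself write, $-\Delta v+Av^{(2-k)/(k+2)}=Bv^{(2p-k)/(k+2)}$ with $(2-k)/(k+2)<0$: the absorption term is \emph{singular} at $v=0$, the ground state is then typically compactly supported rather than exponentially decaying, and the linearized operator carries a potential blowing up like $v^{-2k/(k+2)}$ near the edge of the support. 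Neither uniqueness ``of Kwong--McLeod--Serrin type'' nor, especially, radial non-degeneracy is standard in that setting, and the entire contradiction argument rests on that non-degeneracy, which you acknowledge you do not prove. Second, the compactness step requires uniform a priori bounds on the rescaled family of \emph{all} positive radial solutions (the theorem is not restricted to least-energy solutions), and no mechanism for such bounds is indicated. Third, the item you defer as a ``delicate point''---that $s\mapsto sh_m'(s)/h_m(s)$ is nonincreasing past the zero $s_0$ of $h_m$---is not a side issue: it is the whole content of the proof.

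The paper's route is more elementary and entirely non-perturbative in $m$. It applies the Serrin--Tang criterion \eqref{2.1} directly, computes $K(s)=(h'+sh'')h-s(h')^2$ explicitly, and organizes it as in \eqref{formulaK} into
\begin{equation*}
K(s)=m^2(g')^3\Bigl[\bigl(\tfrac{g^{p-1}}{m}-1\bigr)^2{\mathscr H}_1(s)+(p-1)\bigl(\tfrac{g^{p-1}}{m}-1\bigr){\mathscr H}_2(s)-s(p-1)^2g'\Bigr],
\end{equation*}
where ${\mathscr H}_1,{\mathscr H}_2$ do not depend on $m$. The asymptotic analysis of $g$ (Lemma~\ref{l2.3}) together with hypotheses \eqref{general-ip-1} and \eqref{k-mag-2} shows ${\mathscr H}_1(s)<0$ and ${\mathscr H}_2(s)<0$ for all $s\geq\bar s$ with $\bar s$ independent of $m$; the sole role of taking $m$ large is to push $s_0=g^{-1}(m^{1/(p-1)})$ of \eqref{essezero} beyond $\bar s$. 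No limit problem, no non-degeneracy, and no a priori bounds are needed. If you want to salvage your approach, you would have to supply the singular-limit uniqueness/non-degeneracy theory for the compactly supported profile, which is substantially harder than the computation it is meant to replace.
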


\begin{theorem}
	\label{mainbis}
	Let $N\geq 3$, $a_1>0$ and $\psi\in C^2(\R)$. Assume that there exist $0<k\leq 2$ such that 
	\begin{equation*}
	1<p<\frac{(k+1)N+2}{N-2},
	\end{equation*}
	and $a(s)=a_1|s|^k+\psi(s)$, where $\psi$ satisfies \eqref{general-ip-1}. Furthermore,
	\begin{equation}
		\label{k-min-2}	
		0<\lim_{s\to+\infty} \psi(s)<+\infty,\qquad
		\lim_{s\to+\infty} s \psi'(s)=\lim_{s\to+\infty} s^2 \psi''(s)=0.
	\end{equation}
	Then there exists $m_0>0$ such that problem \eqref{problema}
	admits a unique solution $u\in X\cap C^2(\R^N)$, up to translations, for all $m\geq m_0$.
\end{theorem}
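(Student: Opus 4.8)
The plan is to strip \eqref{problema} of its quasi-linear character by a change of unknown that renders it \emph{semilinear}, and then to appeal to the classical ODE uniqueness theory for positive radial ground states. Since $a_1>0$, $k>0$ and $\inf_{s\ge0}\psi(s)>0$ force $a(s)>0$ on $[0,\infty)$, I first define $f\colon[0,\infty)\to[0,\infty)$ by $f(0)=0$, $f'(s)=\sqrt{a(s)}$, which is a strictly increasing $C^2$ diffeomorphism. A direct computation shows $a(u)^{-1/2}\big(-\dvg(a(u)Du)+\tfrac12 a'(u)|Du|^2\big)=-\Delta v$ for $v:=f(u)$, so that \eqref{problema} is equivalent to
\begin{equation*}
-\Delta v=g(v):=\frac{u^{p}-mu}{\sqrt{a(u)}},\qquad u=f^{-1}(v),\quad v>0\ \text{radial in }\R^N .
\end{equation*}
Because $a(u)|Du|^2=|Dv|^2$, the map $u\mapsto v=f(u)$ is a bijection between solutions of \eqref{problema} in $X\cap C^2(\R^N)$ and positive radial $C^2$ solutions of this semilinear equation that decay at infinity (with $u\in X\iff v\in H^1(\R^N)$). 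Uniqueness for \eqref{problema} is thereby reduced to uniqueness of the semilinear ground state $v$.

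Next I would check that $g$ fits the admissible profile of the Kwong--McLeod / Serrin--Tang uniqueness theory (in the spirit of \cite{Kwong}). The factor $u^{p}-mu=u(u^{p-1}-m)$ vanishes exactly at $u=0$ and at $b:=m^{1/(p-1)}$, is negative on $(0,b)$ and positive on $(b,\infty)$; since $\sqrt{a}>0$, the nonlinearity $g$ possesses a single zero $\beta=f(b)$, with $g<0$ on $(0,\beta)$ and $g>0$ on $(\beta,\infty)$, while $a(0)=\psi(0)>0$ gives $g'(0)=-m/\psi(0)<0$, which yields the exponential decay of $v$ and hence $v\in H^1(\R^N)$. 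For such a nonlinearity, a radial ground state is unique provided, in addition, the function $K(v):=v\,g'(v)/g(v)$ is non-increasing on the set $\{g>0\}=(\beta,\infty)$; since $v=f(u)$ is increasing, this is equivalent to the monotonicity in the $u$ variable of
\begin{equation*}
K=\frac{f(u)}{u^{p}-mu}\left[\frac{p\,u^{p-1}-m}{\sqrt{a(u)}}-\frac{(u^{p}-mu)\,a'(u)}{2\,a(u)^{3/2}}\right],\qquad u>b .
\end{equation*}

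The heart of the proof --- and the step I expect to be the main obstacle --- is establishing this monotonicity, and this is precisely where the threshold $m_0$ and the asymptotic hypotheses on $\psi$ enter. As $m\to+\infty$ the zero $b=m^{1/(p-1)}$ is pushed to $+\infty$, so the relevant range $u>b$ lives entirely in the region where $a(u)=a_1u^{k}+\psi(u)$ is a \emph{controlled} perturbation of the pure power $a_1u^{k}$. In the regime $0<k\le2$ of Theorem \ref{mainbis}, the finite positive limit in \eqref{k-min-2}, together with $\lim_{s\to\infty}s\psi'(s)=\lim_{s\to\infty}s^{2}\psi''(s)=0$, guarantees that $\psi(u)/u^{k}$, $u\psi'(u)/u^{k}$ and $u^{2}\psi''(u)/u^{k}$ all tend to $0$, so that the leading behaviour of $g$ is $v^{q}$ with $q=(2p-k)/(k+2)$, which lies in $\big(0,(N+2)/(N-2)\big)$ exactly because $k<2p$ and $p<((k+1)N+2)/(N-2)$. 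For the idealized pure power $a=a_1u^{k}$ the nonlinearity $g$ reduces to a difference of two powers of $v$, for which $K$ is strictly decreasing precisely as in Kwong's computation; the contributions of $\psi,\psi',\psi''$ (the latter entering through $a''$ in $K'$) are then shown to be uniformly small corrections once $b$ is large, so that $K'\le0$ persists throughout $(b,\infty)$, and this is what produces $m_0$. The sign conditions \eqref{general-ip-1}, in particular $k\psi-s\psi'\ge0$, are what keep these corrections from forcing a sign change of $K'$ near $u=b$, where $u^{p}-mu\to0$ and the expression for $K$ is most delicate. The parallel statement for $k>2$ (Theorem \ref{main}) is handled identically, the only change being that the decay rates in \eqref{k-mag-2} replace the stabilization \eqref{k-min-2} in bounding the correction terms, consistently with the requirement $\inf_{s\ge0}\psi(s)>0$ in each regime.

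Finally, existence of the radial ground state $v$ follows from standard variational arguments of Berestycki--Lions type for the subcritical nonlinearity $g$ (which changes sign once and has $g'(0)<0$, so that the potential $G(v)=\int_0^v g$ is negative near $0$ and positive for large $v$), or alternatively from the existence results quoted for \eqref{problema0}; transporting back by $u=f^{-1}(v)$ produces a solution of \eqref{problema} in $X\cap C^2(\R^N)$. Combined with the uniqueness coming from the monotonicity of $K$, this yields a unique solution up to translations for every $m\ge m_0$, completing the proof.
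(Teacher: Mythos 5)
Your route is the paper's route: the substitution $v=f(u)$ with $f'=\sqrt{a}$ is exactly the inverse of the map $g$ the paper defines through $g'(s)=1/\sqrt{a(g(s))}$, the semilinear equation you arrive at is \eqref{1.5}, the equivalence between uniqueness in $X$ and uniqueness in $H^1(\R^N)$ is Lemma~\ref{UniqLink}, and the criterion you invoke --- monotonicity of $\mathscr{K}_h(s)=sh'(s)/h(s)$ on the set where the nonlinearity is positive, with $m$ taken large so that the zero $s_0=g^{-1}(m^{1/(p-1)})$ is pushed into the asymptotic regime --- is precisely the Serrin--Tang condition \eqref{2.1} that the paper verifies. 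So there is no divergence of strategy; the issue is that the step you yourself call ``the heart of the proof'', namely the verification that $\mathscr{K}_h'\le 0$ on $[s_0,\infty)$, is not carried out, and the heuristic you offer in its place is not correct in the regime $0<k\le 2$ of this theorem.

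Two concrete problems. First, the leading large-$v$ behaviour of the transformed nonlinearity is a single power $v^{(2p-k)/(k+2)}$, for which $\mathscr{K}_h$ is \emph{constant} and $\mathscr{K}_h'\equiv 0$; the sign of $\mathscr{K}_h'$ is therefore decided entirely by the lower-order terms, so one cannot conclude by declaring them small. Second, and more seriously, the contributions of $\psi$ are \emph{not} uniformly small relative to the terms that fix the sign. The paper's computation hinges on the cancellation $G_0(s):=s g_0'\cdot\frac{1}{g_0'}-\frac{2}{k+2}\frac{g_0}{g_0'}$, i.e.\ on the difference $sg_0'-\frac{2}{k+2}g_0$ of two quantities each of order $s^{2/(k+2)}$ whose difference is only of order $s^{(2-k)/(k+2)}$ when $0<k<2$; measured against this smaller quantity, the $\psi$-correction is of order one, since by Lemma~\ref{segnolemma} and \eqref{limnum} one has $s(g_0')^2\psi(g_0)/G_0(s)\to(2-k)\psi_\infty/\alpha=(2-k)/k\neq 0$. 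Negativity of $\mathscr{H}_1$ is then rescued not by smallness but by the exact identity $(k+2)(k-2p)+(2p-3k)(2-k)=4k(k-p-1)<0$, which uses $p>1\ge k-1$. Relatedly, the hypothesis $k\psi(s)-s\psi'(s)\ge 0$ is needed globally, to make $G_0$ nonnegative and nondecreasing and thereby license replacing $g_0$ by $\frac{k+2}{2}sg_0'$ in the estimates, not merely ``near $u=b$'' as you suggest. Until these estimates are actually performed, the existence of an $m$-independent threshold $\bar s$ with $K(s)\le 0$ for all $s\ge\bar s$ --- and hence the existence of $m_0$ --- remains unproved.
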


\noindent
Of course, in the particular but physically relevant case when $k=2$, $a_1=2$ and $\psi(s)\equiv 1$, one gets uniqueness of
ground states of the quasi-linear Schr\"odinger equation 
$$
-\Delta u-u\Delta u^2+mu=u^p\quad \text{in $\R^N$},\quad u>0\quad \text{in $\R^N$},\qquad 1<p<(3N+2)/(N-2),
$$
for $m$ large. Essentially,
thinking to more general situations where a nonlinearity $q(u)$ is considered in place of $u^p$, what we 
conjecture that should play a r\v ole is the fact that
$\sqrt{a(s)}/q(s)$ goes to zero for $s\to+\infty$,
namely the source $q$ is dominant upon the quasi-linear diffusion $\sqrt{a(s)}$.
In the study of high-frequency standing wave solutions $\psi(x,t)=e^{\iu mt}u(x)$ to \eqref{schrod}
the value of $m>0$ can be taken large, so that the condition
in the statement of Theorems \ref{main} and \ref{mainbis} is satisfied. 
\vskip2pt
\noindent
Under the assumptions of Theorems \ref{main}-\ref{mainbis}, by exploiting the sign and symmetry
properties of the set of ground state solutions, following as a variant 
of \cite[Theorem 1.3]{CJS}, we have the following

\begin{theorem}
	\label{main2}
	There exists $m_0>0$ such that, for all $m\geq m_0$, the ground state solutions $u\in X$ to
	\begin{equation*}
	-\dvg(a(u)Du)+\frac{1}{2}a'(u)|Du|^2+mu=|u|^{p-1}u  \quad\text{in $\R^N$}, 
	\end{equation*}
	are unique up to translations, positive, radially symmetric, decreasing 
	and exponentially decaying.
\end{theorem}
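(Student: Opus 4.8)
The plan is to establish, in order, the sign, symmetry, monotonicity and decay properties of an arbitrary ground state, and only afterwards to reduce the uniqueness claim to Theorems~\ref{main}--\ref{mainbis}, which already settle uniqueness within the positive radial class. The natural energy attached to the equation is
\[
J(u)=\frac12\int_{\R^N} a(u)|\na u|^2\,dx+\frac{m}{2}\int_{\R^N} u^2\,dx-\frac{1}{p+1}\int_{\R^N}|u|^{p+1}\,dx,
\]
and by a ground state I mean a least--energy nontrivial critical point of $J$ in $X$, whose existence for $m\geq m_0$ is guaranteed as in \cite{ruizsic,CJS}.

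First I would prove positivity. As in the argument behind \cite[Theorem~1.3]{CJS}, replacing $u$ by $|u|$ does not increase $J$ and produces another least--energy element solving the equation with the nonnegative source $|u|^p$. Since $a(u)\geq\inf_{s\geq 0}\psi(s)>0$ makes the operator uniformly elliptic along the $C^2$ solution, the strong maximum principle forces $|u|>0$; hence, after this replacement, the ground state is strictly positive and solves \eqref{problema0}.

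Next comes radial symmetry and monotonicity, which is the crux of the argument and the step I expect to be most delicate. Let $u^\ast$ denote the Schwarz symmetric decreasing rearrangement of $u$. The quadratic and the $L^{p+1}$ terms are invariant under rearrangement, while a P\'olya--Szeg\H{o} type inequality for the weighted Dirichlet energy $\int_{\R^N} a(u)|\na u|^2$ yields $J(u^\ast)\leq J(u)$. As $u$ is a ground state, equality must hold, and the rigidity in the equality case forces $u=u^\ast$ up to a translation, so that $u$ is radially symmetric about some point and radially decreasing. The delicate points are to justify the weighted P\'olya--Szeg\H{o} inequality for the possibly degenerate coefficient $a$ (exploiting the coarea formula together with $a(u)\geq\psi(0)>0$ to avoid degeneracy), and, above all, to extract the \emph{strict} monotonicity and the identification $u=u^\ast$ from the equality case, which is what actually pins the solution down as a genuine radial profile rather than merely producing a radial competitor.

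Finally, exponential decay follows from the asymptotics of the equation at infinity: as $|x|\to\infty$ one has $u\to 0$, whence $a(u)\to a(0)=\psi(0)>0$ while $\tfrac12 a'(u)|\na u|^2$ is of lower order, so the equation linearises to $-\psi(0)\Delta u+mu\approx u^p$ with $p>1$; comparison with the radial supersolution $Ce^{-c|x|}$, for $c<\sqrt{m/\psi(0)}$, gives $u(x)\leq Ce^{-c|x|}$. With positivity, radial symmetry, monotonicity and the regularity $u\in C^2(\R^N)$ in hand, the ground state lies in the class covered by Theorem~\ref{main} (when $k>2$) or Theorem~\ref{mainbis} (when $0<k\leq 2$). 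Since each of these asserts a unique positive radial solution for $m\geq m_0$, and both the operator and the source are translation invariant, any two ground states coincide after recentring, i.e. are unique up to translations. Collecting the four properties establishes the statement.
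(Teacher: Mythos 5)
Your overall architecture --- first establish positivity, radial symmetry, monotonicity and decay of an arbitrary ground state, then invoke Theorems~\ref{main}--\ref{mainbis} to conclude uniqueness within the positive radial class --- is exactly what the paper intends: the authors give no argument beyond the phrase ``following as a variant of \cite[Theorem~1.3]{CJS}'', so the substance of a proof lies precisely in the qualitative properties you are trying to supply. Your final reduction step and the exponential decay argument are sound.

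The symmetry step, however, contains two genuine gaps as written. First, from $J(u^\ast)\leq J(u)$ you cannot conclude that ``equality must hold'' merely because $u$ is a ground state: $J$ is unbounded from below (dilate $u(\cdot/\lambda)$ and let $\lambda\to\infty$), so a ground state is a least-energy \emph{critical point}, not a minimizer of $J$, and $u^\ast$ is not known to be a critical point. One must first recharacterize the ground-state level as a constrained minimum (on the Pohozaev or Nehari manifold) for which a suitable dilation of $u^\ast$ is admissible. Second, and more seriously, equality in the P\'olya--Szeg\H{o} inequality does \emph{not} in general force $u=u^\ast$ up to translation; by the Brothers--Ziemer theorem this rigidity requires in addition that the set where $\na u^\ast$ vanishes inside the relevant range of levels has measure zero, a condition you would still have to verify. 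The route the paper actually has in mind avoids both issues: pass to the dual variable $v=g^{-1}(u)$ as in Section~2, under which ground states of the quasilinear functional correspond to ground states of the semilinear functional $I$ of \eqref{1.6} (note that $\int a(u)|\na u|^2=\int|\na v|^2$); for $-\Delta v=h(v)$ with $h$ negative on $(0,s_0]$ and $h'(0)=-m/a(0)<0$, positivity, radial symmetry, strict radial decrease and exponential decay of \emph{every} least-energy solution follow from the classical theory of scalar field equations (Berestycki--Lions \cite{berlio} together with symmetry results of Gidas--Ni--Nirenberg or Byeon--Jeanjean--Mari\c{s} type), and these properties transfer back to $u=g(v)$ because $g$ is increasing. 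Either repair the rearrangement argument along these lines or replace it by the dual reduction.
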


\noindent
According to the achievements established 
in the recent papers \cite{CJS,CLW}, for the Schr\"odinger equation \eqref{schrod}, the orbital stability range for the 
standing waves solutions associated with the problem
\begin{equation}
	\label{minproblll}
\min_{\|u\|_{L^2(\R^N)}=\gamma}{\mathscr E}(u),\quad \gamma>0,
\qquad {\mathscr E}(u)=\frac{1}{2}\int_{\R^N}a(u)|Du|^2-\frac{1}{p+1}\int_{\R^N}|u|^{p+1},
\end{equation}
is $1<p<k+1+{4/N},$ which is contained in the optimal range of the statements of
Theorems \ref{main}-\ref{mainbis}. We point out that the uniqueness, up to translations,
of the solutions to problem \eqref{problema} does not easily yield uniqueness of the solutions 
to the minimization problem~\eqref{minproblll}. This further conclusion, which is currently unavailable, 
would of course be rather useful for both analytical and numerical purposes.

\section{Dual semi-linear problem}

\noindent
Problem \eqref{problema} is formally associated with the functional 
$$
X\ni u\mapsto \frac{1}{2}\int_{\R^N}a(u)|Du|^2+\frac{m}{2}\int_{\R^N}|u|^2-\frac{1}{p+1}\int_{\R^N}|u|^{p+1}.
$$
Let $a\in C^2(\R)$ be such that $a(s)\geq \nu$ for any $s\in\R$, for some $\nu>0$.
Let $g:\R\to\R$ denote the (strictly increasing) solution to the Cauchy problem
%VECCHIA
%For a $a\in C^2(\R)$, let $g:\R\to\R$ denote the (strictly increasing) solution to the Cauchy problem
\begin{equation}
	\label{1.1}
g'(s)=\frac 1{\sqrt{a(g(s))}},\quad \quad g(0)=0.
\end{equation}
The solution is global due to the coercivity of $a$.%, namely $a(s)\geq \nu$ for any $s\in\R$, for some $\nu>0$.

\begin{lemma}
	\label{primolem}
The function $g$ is uniquely defined, $g\in C^3(\R)$ and invertible. Moreover we have
\begin{eqnarray}
&&|g(s)| \leq \frac 1{\sqrt \nu}|s|,\qquad \text{for every $s\in\R$,}\label{1.2}\\
%&& g(s)>sg'(s)\qquad \text{for every $s\geq 0$,} \label{1.3}       \\
&&\lim_{s\to 0}\frac {g(s)}s=c_0,\quad\,\, c_0:=\frac 1{\sqrt{a(0)}}>0\qquad \text{and} 
\qquad \lim_{t\to 0}\frac {g^{-1}(t)}t=\frac 1{c_0}. \label{1.4}
\end{eqnarray}
\end{lemma}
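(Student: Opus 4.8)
The plan is to establish each assertion of Lemma~\ref{primolem} directly from the Cauchy problem \eqref{1.1}. First I would address existence, uniqueness, and regularity. Since $a\in C^2(\R)$ is coercive with $a(s)\geq\nu>0$, the right-hand side $s\mapsto 1/\sqrt{a(g)}$ is $C^2$ in $g$ and bounded above by $1/\sqrt\nu$, so the classical Picard--Lindel\"of theorem gives a unique local solution, and the uniform bound $|g'|\leq 1/\sqrt\nu$ rules out finite-time blow-up, yielding a global solution $g:\R\to\R$. For regularity, I would bootstrap: $g'=1/\sqrt{a(g)}$ shows $g'\in C^2$ because $a\in C^2$ and $g\in C^1$, so $g\in C^3(\R)$. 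Since $g'(s)=1/\sqrt{a(g(s))}>0$ everywhere, $g$ is strictly increasing, hence injective; surjectivity onto $\R$ (so that $g^{-1}$ is globally defined) follows from showing $g(s)\to\pm\infty$ as $s\to\pm\infty$, which I expect to be the main technical point and which I address below.

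Next I would prove the bound \eqref{1.2}. Because $g(0)=0$ and $0<g'(s)\leq 1/\sqrt\nu$ for all $s$, integrating from $0$ to $s$ gives $|g(s)|=\bigl|\int_0^s g'(\sigma)\,d\sigma\bigr|\leq |s|/\sqrt\nu$ for every $s\in\R$, which is exactly \eqref{1.2}. Incidentally, this linear upper bound alone is not enough to force surjectivity; for that I would instead argue from below. A clean way is to note that $a$ is locally bounded, so on any bounded range of $g$-values the derivative $g'$ is bounded below by a positive constant; combining this with the fact that $g$ is monotone and defined on all of $\R$ forces the image to be unbounded in both directions, so $g$ maps $\R$ onto $\R$ and $g^{-1}:\R\to\R$ is well defined and, by the inverse function theorem, $C^3$ as well.

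Finally I would compute the limits in \eqref{1.4}. Evaluating \eqref{1.1} at $s=0$ gives $g'(0)=1/\sqrt{a(g(0))}=1/\sqrt{a(0)}=c_0$, and since $g$ is differentiable at $0$ with $g(0)=0$, the definition of the derivative yields $\lim_{s\to0} g(s)/s = g'(0)=c_0$. For the inverse, I would set $t=g(s)$, so that $t\to0$ exactly when $s\to0$ by continuity of $g$ and $g^{-1}$, and write $g^{-1}(t)/t = s/g(s)$, whose limit is the reciprocal $1/c_0$ of the previous limit; alternatively one differentiates $g^{-1}$ directly via $(g^{-1})'(t)=1/g'(g^{-1}(t))$ and evaluates at $t=0$.

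The main obstacle I anticipate is not any single computation but the verification of global surjectivity of $g$, since the statement \eqref{1.2} only supplies an upper bound on the growth of $g$; one must separately ensure that the monotone global solution does not have a bounded range, which requires using the local boundedness of $a$ to obtain a positive lower bound on $g'$ over bounded sets. Once invertibility is secured, the remaining regularity and limit assertions follow by elementary calculus.
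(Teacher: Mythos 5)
Your proof is correct and follows essentially the same route as the paper's: integrate the bound $|g'|\le 1/\sqrt{\nu}$ to get \eqref{1.2}, read the limits \eqref{1.4} off the ODE at $s=0$ (i.e.\ $g'(0)=1/\sqrt{a(0)}$), and deduce invertibility from strict monotonicity of $g$. The only difference is that you spell out the global existence and the surjectivity of $g$ onto $\R$ (via local boundedness of $a$ giving a positive lower bound on $g'$ over bounded ranges), points the paper asserts without comment; this is a worthwhile clarification but not a different method.
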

\begin{proof}
Since $a(s)\geq \nu$ for every $s\in\R$,
from \eqref{1.1}  we have that $|g'(s)|\leq \frac 1{\sqrt \nu}$
for all $s\in\R$. Integrating, we get \eqref{1.2}.
%$|g(s)|\leq \frac 1{\sqrt \nu}|t|$.
%From \eqref{1.1} we have that $ \sqrt{a(g)}g'=1$. Integrating we get
%$\int_0^t \sqrt{a(g(t))}g'(t)dt=t$ so that $\int_0^{g(t)} \sqrt{a(y)}dy=t$. Then
Furthermore, from \eqref{1.1}, we have
$$
\lim_{s\to 0}\frac {g(s)}s=\lim_{s\to 0}\frac 1{\sqrt{a(g(s))}}=\frac 1{\sqrt{a(0)}}.
$$
From \eqref{1.1} $g$ is invertible and \eqref{1.4} follows. 
%Finally let us consider the function $\Psi(s):= g(s)\sqrt{a(g)}-s$. We have $\Psi(0)=0$
%since $g(0)=0$ and $\Psi'(s)=\frac{a'(g) g}{2a(g)}>0$ from \eqref{apropp}. Then $\Psi(s)>0$ and \eqref{1.3} follows.
\end{proof}

\noindent
Using $g$, we can related, as in \cite{watanabe,AR}, problem \eqref{problema} 
to the following semi-linear equation
\begin{equation}\label{1.5}
-\Delta v+m\dfrac{g(v)}{\sqrt{a(g(v))}}=\dfrac{|g(v)|^{p-1}g(v)} {\sqrt{a(g(v))}}  \quad\text{in $\R^N$}, \qquad
v>0\quad \text{in $\R^N$},
\end{equation}
and its associated functional $I:H^1(\R^N)\to\R$ defined by
\begin{equation}\label{1.6}
I(v)=\frac{1}{2}\int_{\R^N}|Dv|^2-\int_{\R^N}H(v),\qquad
H(s)=\int_0^s h(t)dt,\quad h(t)=\frac{|g(t)|^{p-1}g(t)-mg(t)} {\sqrt{a(g(t))}}.
\end{equation}

\noindent
Consider now the following condition, namely there exists $a_1>0$ such that
\begin{equation}
	\label{1.7}
\lim_{s\to +\infty}\frac{a(s)} {s^k}=a_1.
\end{equation}

\begin{lemma}\label{l2.3}
Let $g$ be as defined in \eqref{1.1} and assume condition \eqref{1.7}. Then, we have
\begin{equation}\label{1.8}
\lim_{s\to +\infty}\frac{g(s)}{s^{\frac 2{k+2}}}=c_1, \,\qquad c_1:= \big(\frac{k+2} 2 \frac 1{\sqrt a_1} \big)^{\frac 2{k+2}}.
\end{equation}
Furthermore, it holds
\begin{equation}\label{1.8a}
\lim_{t\to +\infty}\frac {g^{-1}(t)}{t^{\frac {k+2}2}} =\Big(\frac 1{c_1}\Big)^{\frac {k+2}2}
\end{equation}
and
\begin{equation}\label{limnum}
\lim_{s\to +\infty} s(g')^2 =\left\{ \begin{array}{ll}
0 & \text{if $k>2$}\\
\frac 1{a_1c_1^k}& \text{if $k=2$}\\
+\infty &\text{if $0<k<2$.}
\end{array}
\right.
\end{equation}
\end{lemma}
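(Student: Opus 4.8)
The plan is to reduce everything to the leading-order asymptotics of $g$, and then obtain the two remaining statements by inversion and by direct substitution. First I would record that $g$ is strictly increasing with $g(s)\to+\infty$ as $s\to+\infty$. Indeed $g'>0$ by \eqref{1.1}, and if $g$ stayed bounded, say $g(s)\le M$, then $a(g(s))\le\max_{[0,M]}a$ would be bounded, forcing $g'$ to stay bounded below by a positive constant and hence $g$ to be unbounded — a contradiction. Since $g(s)\to+\infty$, condition \eqref{1.7} applies along the values $g(s)$, giving the key limit
\[
\frac{g(s)^{k/2}}{\sqrt{a(g(s))}}=\Big(\frac{a(g(s))}{g(s)^{k}}\Big)^{-1/2}\longrightarrow\frac{1}{\sqrt{a_1}},\qquad s\to+\infty.
\]

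To prove \eqref{1.8}, I would set $f(s):=g(s)^{(k+2)/2}$, which is $C^1$ for $s$ large with
\[
f'(s)=\frac{k+2}{2}\,g(s)^{k/2}g'(s)=\frac{k+2}{2}\,\frac{g(s)^{k/2}}{\sqrt{a(g(s))}}\longrightarrow\frac{k+2}{2}\,\frac{1}{\sqrt{a_1}}
\]
by \eqref{1.1} and the previous limit. Since $f(s)\to+\infty$, de l'H\^opital's rule in the $\infty/\infty$ form yields $f(s)/s\to\frac{k+2}{2\sqrt{a_1}}$, that is $g(s)^{(k+2)/2}\sim\frac{k+2}{2\sqrt{a_1}}\,s$. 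Raising to the power $2/(k+2)$ produces exactly \eqref{1.8} with $c_1=\big(\frac{k+2}{2}\frac{1}{\sqrt{a_1}}\big)^{2/(k+2)}$.

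The inversion formula \eqref{1.8a} then follows by writing $t=g(s)$ and letting $t\to+\infty$ (equivalently $s\to+\infty$): from $t\sim c_1\,s^{2/(k+2)}$ one solves $s=g^{-1}(t)\sim(1/c_1)^{(k+2)/2}\,t^{(k+2)/2}$. For \eqref{limnum} I would use \eqref{1.1} to write $s(g'(s))^2=s/a(g(s))$, and then combine \eqref{1.7} with \eqref{1.8} to get $a(g(s))\sim a_1 g(s)^k\sim a_1 c_1^k\,s^{2k/(k+2)}$, whence
\[
s\,(g'(s))^2\sim\frac{1}{a_1 c_1^{k}}\,s^{\frac{2-k}{k+2}},\qquad s\to+\infty.
\]
The sign of the exponent $(2-k)/(k+2)$ gives the three cases: the limit is $0$ for $k>2$, the finite constant $1/(a_1 c_1^{k})$ for $k=2$, and $+\infty$ for $0<k<2$.

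The only genuinely delicate point is the passage from the convergence of $f'$ to that of $f/s$; I would justify it by de l'H\^opital's rule, whose hypotheses $f\to+\infty$ and $f'\to L$ are met here, rather than by naively differentiating an assumed asymptotic. I would also be careful that all the equivalences above are understood only in the limit $s\to+\infty$, where $g$ is smooth and strictly increasing, so that no subtlety arises from the behavior near the origin.
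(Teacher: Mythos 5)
Your proof is correct, and for \eqref{1.8a} and \eqref{limnum} it coincides with the paper's argument (substitute $t=g(s)$, respectively write $s(g')^2=s/a(g)$ and insert the asymptotics of $a$ and $g$). The one place where you genuinely diverge is the proof of \eqref{1.8}. The paper applies de l'H\^opital directly to $g(s)/s^{2/(k+2)}$ and arrives at an identity of the form $L=\frac{k+2}{2\sqrt{a_1}}\,L^{-k/2}$ for the sought limit $L$, which it then solves after observing that $L$ can be neither $0$ nor $+\infty$; this is slightly self-referential, since the limit of the quotient of derivatives is expressed in terms of the very limit one is trying to establish, and making it fully rigorous requires the $\liminf/\limsup$ form of the rule. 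Your substitution $f(s)=g(s)^{(k+2)/2}$ removes this circularity entirely: $f'(s)=\frac{k+2}{2}\,g^{k/2}/\sqrt{a(g)}$ converges to the explicit constant $\frac{k+2}{2\sqrt{a_1}}$ by \eqref{1.7} alone, so de l'H\^opital applies in its textbook form and \eqref{1.8} follows by raising to the power $2/(k+2)$. What the paper's route buys is brevity and no auxiliary function; what yours buys is a cleaner, unconditionally justified application of the rule. Your preliminary verification that $g(s)\to+\infty$ (via the boundedness-of-$a$ contradiction) is also a welcome explicit justification of a step the paper attributes simply to ``monotonicity of $g$ and problem \eqref{1.1}''.
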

\begin{proof}
From the monotonicity of $g$, and from problem \eqref{1.1} we have that $g(s)\to+\infty$ 
as $s\to+\infty$ and $g'(s)\to 0$ as $s\to+\infty$. Then, using \eqref{1.7}, we have
\begin{align*}
\lim_{s\to +\infty}\frac{g(s)}{s^{\frac 2{k+2}}} &=\lim_{s\to +\infty}\frac{g'(s)}{\frac 2{k+2}s^{\frac 2{k+2}-1}}
=\frac {k+2} 2\lim_{s\to +\infty}\frac {s^{\frac k{k+2}}}{\sqrt{a(g)}} \nonumber\\
&= \frac {k+2} 2\lim_{s\to +\infty}\sqrt{\frac {s^{\frac {2k}{k+2}}}{\frac {a(g)}{g^k} g^k}}
=\frac {k+2} 2\frac 1{\sqrt{a_1}} \lim_{s\to +\infty}\Big( \frac {s^{\frac 2{k+2}}}{g(s)}\Big)^{\frac k2}.
\end{align*}
In particular, the limit on the left hand side cannot be equal to $0$ or $+\infty$. In turn, we get
$$
\Big( \lim_{s\to +\infty}\frac{g(s)}{s^{\frac 2{k+2}}}\Big)^{1+\frac k2}=\frac {k+2} 2\frac 1{\sqrt{a_1}},
$$
and the claim follows. Moreover, we have
$$
\lim_{t\to +\infty} \frac {g^{-1}(t)}{t^{\frac {k+2}2}}=\lim_{s\to +\infty}\frac {s} {g(s)^{\frac {k+2}2}}
=\lim _{s\to +\infty}\Big[\frac {s^{\frac 2{k+2}}} { g(s)}\Big]^{\frac {k+2}2}=\Big(\frac 1{c_1}\Big)^{\frac {k+2}2},
$$
completing the proof of \eqref{1.8a}. Finally, observing that 
\begin{align*}
\lim_{s\to +\infty} s(g')^2 &=\lim_{s\to +\infty}\frac s{a(g)}=\lim_{s\to +\infty} \frac s{g^k\frac{a(g)}{g^k}}=\frac 1 {a_1}\lim_{s\to +\infty}
\frac s{\Big( \dfrac {g(s)} {c_1s^{\frac 2{k+2}}}\Big)^k c_1^k s^{\frac {2k}{k+2}}}
%\\&
=\frac 1 {a_1c_1^k}\lim_{s\to +\infty} s^{\frac {2-k}{k+2}}
\end{align*}
then \eqref{limnum} follows.
\end{proof}

\noindent
For the sake of completeness, we recall the following two propositions.

\begin{proposition}
Suppose $a(s)|s|^{-k}\to a_1$ as $s\to\infty$ and $1<p<\frac{(k+1)N+2}{N-2}$. Then $I\in C^1(H^1(\R^N))$.
\end{proposition}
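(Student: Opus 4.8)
The plan is to split the functional as $I=Q-\Psi$, where $Q(v)=\tfrac12\int_{\R^N}|Dv|^2$ and $\Psi(v)=\int_{\R^N}H(v)$. The quadratic form $Q$ is continuous on $H^1(\R^N)$, hence of class $C^\infty$, with $Q'(v)\varphi=\int_{\R^N}Dv\cdot D\varphi$; so the whole issue is the regularity of the superposition functional $\Psi$. First I would establish two-sided growth bounds for $h$ and $H$ out of the asymptotics of $g$ and $\sqrt{a(g)}$, and only then invoke the standard Nemytskii machinery.

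For the growth, near $0$ we have $g(s)=c_0 s+o(s)$ by \eqref{1.4} and $a(g(s))\to a(0)$, so $h(s)=-\tfrac{m}{a(0)}s+o(s)$ and thus $|h(s)|\le C|s|$ for $|s|\le 1$; the superlinear piece $|g|^{p-1}g/\sqrt{a(g)}=O(|s|^p)$ is negligible there since $p>1$. At infinity Lemma \ref{l2.3} gives $g(s)\sim c_1|s|^{2/(k+2)}$ and $\sqrt{a(g(s))}\sim\sqrt{a_1}\,c_1^{k/2}|s|^{k/(k+2)}$, so the dominant term of $h$ is $|g|^{p-1}g/\sqrt{a(g)}\sim C|s|^{(2p-k)/(k+2)}$, the linear contribution $mg/\sqrt{a(g)}\sim C'|s|^{(2-k)/(k+2)}$ being of lower order because $p>1$. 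Setting $q:=\frac{2(p+1)}{k+2}$, this yields the global estimate $|h(s)|\le C(|s|+|s|^{q-1})$ and, after integration, $|H(s)|\le C(|s|^2+|s|^q)$. A direct computation shows $q<\frac{2N}{N-2}$ is equivalent to $p<\frac{(k+1)N+2}{N-2}$, so the hypothesis on $p$ is exactly the \emph{subcriticality} of the growth; if $q\le2$ one simply replaces it by any exponent in $(2,\frac{2N}{N-2})$, the bound being preserved.

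To conclude, the Sobolev embedding $H^1(\R^N)\hookrightarrow L^r(\R^N)$ for every $r\in[2,\frac{2N}{N-2}]$ together with the bound on $H$ makes $\Psi$ finite on $H^1(\R^N)$. I would decompose $h(v)$ into a part bounded by $C|v|$, which lies in $L^2$, and a part bounded by $C|v|^{q-1}$, which lies in $L^{q'}$ with $q'=q/(q-1)$ (since $(q-1)q'=q$); Hölder's inequality paired against $\varphi\in H^1\subset L^2\cap L^q$ then shows $\varphi\mapsto\int_{\R^N}h(v)\varphi$ is a bounded linear functional, and the usual difference-quotient argument identifies it with the Gateaux derivative $\Psi'(v)$. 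Continuity of $v\mapsto\Psi'(v)$ reduces, again via Hölder and the embedding, to continuity of the superposition operators $v\mapsto h(v)$ from $L^2$ to $L^2$ and from $L^q$ to $L^{q'}$, which holds by the Krasnoselskii continuity theorem, $h$ being continuous with the above growth. Hence $\Psi\in C^1(H^1(\R^N))$ and therefore $I=Q-\Psi\in C^1(H^1(\R^N))$.

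The only genuinely delicate point is matching the growth of $h$ to the admissible range of exponents on the unbounded domain. On $\R^N$ one must control $h$ by $|s|$ near the origin (so that $h(v)\in L^2$ whenever $v\in L^2$, the measure being infinite) and \emph{simultaneously} by a strictly subcritical power at infinity; checking that both are available and that the borderline exponent is precisely $\frac{(k+1)N+2}{N-2}$ is where the asymptotics of Lemma \ref{l2.3} are indispensable. Once these estimates are secured, the $C^1$ conclusion is routine.
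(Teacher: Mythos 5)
Your proof is correct and follows essentially the same route as the paper: both arguments reduce the matter to the growth bounds $|h(s)|\le C|s|$ near $0$ and $|h(s)|=o(|s|^{2^*-1})$ at infinity, obtained from \eqref{1.4} and the asymptotics of Lemma~\ref{l2.3}, with the exponent check $\frac{2(p+1)}{k+2}<\frac{2N}{N-2}$ being exactly the hypothesis on $p$. The only difference is that the paper then simply invokes \cite[Theorem A.VI]{berlio} for the $C^1$ conclusion, whereas you spell out the standard Nemytskii/Krasnoselskii argument explicitly.
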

\begin{proof}
Since $a(s)|s|^{-k}\to a_1$ as $s\to\infty$, arguing as for the proof of \eqref{1.8} yields 
$$
\lim_{s\to \infty}\frac{|g(s)|}{|s|^{\frac 2{k+2}}}=c_1.
$$ 
Taking into account \eqref{1.4}, and recalling that $\frac {2(p+1)}{k+2}<\frac{2N}{N-2}$, we have
$$
\lim_{s\to 0}\frac{|h(s)|}{|s|}=\lim_{s\to 0}\frac{||g(s)|^{p-1}g(s)-mg(s)|} {|s|\sqrt{a(g(s))}}
=\frac{m} {\sqrt{a(0)}}\lim_{s\to 0}\frac{|g(s)|} {|s|}=\frac{m}{a(0)},
$$
$$
\lim_{s\to \infty}\frac{|h(s)|}{|s|^{2^*-1}}=\lim_{s\to \infty}\frac{||g(s)|^{p-1}g(s)-mg(s)|} {|s|^{2^*-1}\sqrt{a(g(s))}}
=\lim_{s\to \infty}\frac{|g(s)|^{p}} {|s|^{2^*-1}\sqrt{a(g(s))}}=\frac{c_1^{p-k/2}}{\sqrt{a_1}}\lim_{s\to \infty}\frac{|s|^{\frac{2p}{k+2}}} {|s|^{2^*-1}|s|^{\frac{k}{k+2}}}=0.
$$
This yields the assertion by \cite[Theorem A.VI]{berlio}.
\end{proof}

\begin{proposition}
	\label{llinksoll}
Suppose $a(s)|s|^{-k}\to a_1$ as $s\to\infty$ and $1<p<\frac{(k+1)N+2}{N-2}$.
Let $v\in H^1(\R^N)$ be a nontrivial critical point of $I$, $v>0$ and let $u=g(v)$.
Then $u$ is a positive classical solution of \eqref{problema}.
\end{proposition}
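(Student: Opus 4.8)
The plan is to show that the substitution $u=g(v)$ converts the weak Euler--Lagrange equation for $I$ into the classical form of \eqref{problema}. By the previous proposition $I\in C^1(H^1(\R^N))$, so a nontrivial critical point $v$ satisfies
\[
\int_{\R^N}Dv\cdot D\varphi=\int_{\R^N}h(v)\varphi \qquad\text{for all } \varphi\in H^1(\R^N),
\]
which is the weak form of \eqref{1.5}. First I would upgrade $v$ to a classical solution: since $a\geq\nu>0$, the bounds $|h(s)|\leq C|s|$ near $s=0$ and $|h(s)|=o(|s|^{2^*-1})$ as $s\to\infty$ obtained in the proof of the preceding proposition place us in the subcritical regime guaranteed by $p<\frac{(k+1)N+2}{N-2}$. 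A standard Brezis--Kato/Moser iteration then gives $v\in L^q(\R^N)$ for every $q<\infty$, after which Calder\'on--Zygmund and Schauder estimates yield $v\in C^{2}(\R^N)$.

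The computational core is the change of variables. Put $u=g(v)$; by Lemma \ref{primolem} we have $g\in C^3$, hence $u\in C^2(\R^N)$, while $|u|\leq\nu^{-1/2}|v|$ and $|Du|=|g'(v)|\,|Dv|\leq\nu^{-1/2}|Dv|$ show that $u\in H^1(\R^N)$ with $a(u)|Du|^2=|Dv|^2\in L^1(\R^N)$, i.e. $u\in X$. From \eqref{1.1} one has $g'(v)=1/\sqrt{a(u)}$, so $Dv=\sqrt{a(u)}\,Du$ and, by the chain rule,
\[
\Delta v=\sqrt{a(u)}\,\Delta u+\frac{a'(u)}{2\sqrt{a(u)}}\,|Du|^2 .
\]
Substituting this into $-\Delta v=h(v)=(u^{p}-mu)/\sqrt{a(u)}$ and multiplying through by $\sqrt{a(u)}$ gives
\[
-a(u)\Delta u-\tfrac12 a'(u)|Du|^2=u^{p}-mu,
\]
which, upon expanding $\dvg(a(u)Du)=a(u)\Delta u+a'(u)|Du|^2$, is exactly the equation in \eqref{problema}. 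Positivity is then immediate, since $v>0$ and $g$ is strictly increasing with $g(0)=0$ force $u=g(v)>0$.

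I expect the only genuinely technical step to be the regularity bootstrap from $H^1$ to $C^2$; the rest is the algebra of the change of variables, already prepared by Lemma \ref{primolem} and the relation $g'=1/\sqrt{a(g)}$. The point requiring some care is justifying the pointwise chain-rule identities on all of $\R^N$, which is legitimate precisely because $v\in C^2$ and $g\in C^3$.
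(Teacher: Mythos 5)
Your proposal is correct and follows essentially the same route as the paper: obtain $v\in C^2(\R^N)$ by elliptic regularity (the paper simply cites \cite[Lemma 1]{berlio} where you sketch the Brezis--Kato/Schauder bootstrap), then apply the chain rule to $u=g(v)$ using $g'=1/\sqrt{a(g)}$ and multiply the resulting identity by $\sqrt{a(u)}$ to recover \eqref{problema}, with positivity from the strict monotonicity of $g$. The extra observation that $u\in X$ is harmless and is in any case the content of Lemma~\ref{UniqLink} in the paper.
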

\begin{proof}
If $v\in H^1(\R^N)$, $v>0$, is a critical point for $I$, then $v$ is a positive weak
solution of problem \eqref{1.5}. From regularity theory (see e.g.\ \cite[Lemma 1]{berlio}) it follows that $v\in C^2(\R^N)$. Hence, from regularity
and monotonicity of $g$ we have that $u\in C^2(\R^N)$ and   $u=g(v)>0$ in $\R^N$. Moreover $\na v=( g^{-1})'(u)\na u=\sqrt{a(u)}\na u$
and $\Delta v=( g^{-1})''(u)\left|\na u\right|^2 +( g^{-1})'(u)\Delta u
=\sqrt{a(u)}\Delta u+\frac {a'(u)}{2\sqrt{a(u)}}\left|\na u\right|^2 $. Using \eqref{1.5} then
$$
\frac 1{\sqrt{a(u)}} \Big[ -a(u)\Delta u -\frac 12 a'(u)\left|\na u\right|^2+mu-u^p\Big]=0,
$$
so that $u$ is a classical solution of \eqref{problema}.
\end{proof}

\noindent
Next we prove that uniqueness in $H^1(\R^N)$ for \eqref{1.5}
yields uniqueness in $X$ for the original problem, 
%DA QUI
where $X$ is as defined in \eqref{Xdef}.

\begin{lemma}
	\label{UniqLink}
Assume that condition \eqref{1.7} holds and that $1<p<\frac{(k+1)N+2}{N-2}$. Furthermore,
assume that \eqref{1.5} has at most one positive radial solution $v\in H^1(\R^N)\cap C^2(\R^N)$ up to translations.
Then \eqref{problema} admits at most one positive radial solution $u=g(v)\in X\cap C^2(\R^N)$ up to translations.
\end{lemma}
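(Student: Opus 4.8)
The plan is to show that the pointwise change of variable $u=g(v)$ establishes a translation-commuting bijection between the positive radial solutions of the two problems, so that the assumed uniqueness for the semi-linear equation \eqref{1.5} transfers to the quasi-linear problem \eqref{problema}. One direction is already available: by Proposition \ref{llinksoll}, every positive radial critical point $v\in H^1(\R^N)\cap C^2(\R^N)$ of $I$ yields a positive radial classical solution $u=g(v)\in X\cap C^2(\R^N)$ of \eqref{problema}. Hence I only need the converse, namely that whenever $u\in X\cap C^2(\R^N)$ is a positive radial solution of \eqref{problema}, the function $v:=g^{-1}(u)$ is a positive radial solution of \eqref{1.5} belonging to $H^1(\R^N)\cap C^2(\R^N)$.

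First I would settle the algebraic and regularity properties. By \eqref{1.1} and Lemma \ref{primolem} we have $g\in C^3(\R)$ with $g'>0$ and $g(0)=0$, so $g^{-1}$ is itself $C^3$, strictly increasing, and vanishes at the origin; consequently $v=g^{-1}(u)$ is of class $C^2$, positive (since $u>0$), and radially symmetric (since $g^{-1}$ acts pointwise). That $v$ solves \eqref{1.5} follows by reversing the computation in the proof of Proposition \ref{llinksoll}: using $\na v=\sqrt{a(u)}\,\na u$ and $\Delta v=\sqrt{a(u)}\,\Delta u+\frac{a'(u)}{2\sqrt{a(u)}}|\na u|^2$, one divides the expanded equation $-a(u)\Delta u-\frac12 a'(u)|\na u|^2+mu=u^p$ by $\sqrt{a(u)}$ to obtain exactly $-\Delta v+m\,g(v)/\sqrt{a(g(v))}=|g(v)|^{p-1}g(v)/\sqrt{a(g(v))}$.

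The remaining and genuinely delicate point is $v\in H^1(\R^N)$. The gradient is immediate and is precisely what the definition of $X$ is designed for: since $|\na v|^2=a(u)|\na u|^2$, we get $\int_{\R^N}|\na v|^2=\int_{\R^N}a(u)|\na u|^2<+\infty$ because $u\in X$. For the $L^2$ bound on $v$ itself I would exploit the near-zero behaviour of $g^{-1}$. Being a radial Sobolev function that is also continuous, $u$ decays at infinity and is therefore bounded, say $0<u\leq M$; moreover, by \eqref{1.4} the ratio $g^{-1}(t)/t$ extends continuously to $[0,1]$ and is thus bounded there, so on the set $\{u\leq 1\}$ one has $|v|=g^{-1}(u)\leq Cu$ and hence $\int_{\{u\leq 1\}}|v|^2\leq C\|u\|_{L^2}^2$. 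On the complementary set $\{u>1\}$, Chebyshev's inequality bounds the measure by $\|u\|_{L^2}^2<+\infty$, while $v=g^{-1}(u)\leq g^{-1}(M)$ there; therefore $\int_{\{u>1\}}|v|^2<+\infty$ as well. Summing the two contributions gives $v\in L^2(\R^N)$, whence $v\in H^1(\R^N)$. I expect this boundedness-and-integrability step to be the only real obstacle; everything else reduces to the pointwise change of variable.

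Finally I would transfer uniqueness. The maps $u\mapsto g^{-1}(u)$ and $v\mapsto g(v)$ are mutually inverse bijections between the positive radial solution sets of \eqref{problema} and \eqref{1.5}, and each commutes with translations because it is applied pointwise, i.e.\ $(g^{-1}(u))(\cdot-\tau)=g^{-1}\!\big(u(\cdot-\tau)\big)$. Consequently, if $u_1,u_2\in X\cap C^2(\R^N)$ are positive radial solutions of \eqref{problema}, then $g^{-1}(u_1)$ and $g^{-1}(u_2)$ are positive radial solutions of \eqref{1.5} in $H^1(\R^N)\cap C^2(\R^N)$; by the standing uniqueness hypothesis they coincide up to a translation, and applying $g$ pointwise shows that $u_1$ and $u_2$ coincide up to the same translation. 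This proves the claim.
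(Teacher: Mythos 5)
Your proposal is correct, and its overall skeleton coincides with the paper's: set $v=g^{-1}(u)$, check positivity, radial symmetry and $C^2$ regularity, verify that $v$ solves \eqref{1.5} by running the computation of Proposition~\ref{llinksoll} backwards, establish $v\in H^1(\R^N)$, and then pull the uniqueness back through $g$. The one place where you genuinely diverge from the paper is the $L^2$ bound on $v$, which is indeed the only delicate step. The paper argues via the growth of $g^{-1}$ at infinity: from \eqref{1.4} and \eqref{1.8a} one has $|g^{-1}(t)|\leq C|t|+C|t|^{(k+2)/2}$, hence $v^2\leq Cu^2+Cu^{(k+2)N/(N-2)}$ pointwise, and the second term is controlled by the Sobolev inequality applied to $u^{(k+2)/2}$ together with the splitting $\int u^k|\na u|^2\leq \rho^k\int_{\{u\leq\rho\}}|\na u|^2+C\int_{\{u\geq\rho\}}a(u)|\na u|^2$, which is finite precisely because $u\in X$; note that this route uses only the membership $u\in X$ and never radial symmetry. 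You instead invoke the Strauss radial decay lemma to conclude that the continuous radial $H^1$ function $u$ is bounded, and then combine the near-zero bound $g^{-1}(t)\leq Ct$ with a Chebyshev estimate of $|\{u>1\}|$. Your argument is more elementary and avoids the asymptotics of $g^{-1}$ and the Sobolev embedding entirely, at the price of using radial symmetry in an essential way (which the lemma's hypotheses do grant you) and of relying on the radial lemma as an unproved but standard ingredient. Both arguments are sound; the paper's is the more robust one, yours the shorter one under the stated hypotheses.
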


\begin{proof}
Let $u\in X\cap C^2(\R^N)$ with $u>0$ and set $v=g^{-1}(u)$. Then $v\in C^2(\R^N)$, $v>0$, and
$$
|\na v|^2 =|( g^{-1})'|^2|\na u|^2 =a(u)|\na u|^2 \in L^1(\R^N).
$$
By Lemmas \ref{primolem}-\ref{l2.3}, we have $|g^{-1}(s)| \leq C|s|+C|s|^{\frac {k+2}2}$ 
(hence $|g^{-1}(s)| \leq C|s|+C|s|^{(k+2)\frac {N}{N-2}}$), yielding
$$
\int_{\R^N}v^2\leq C\int_{\R^N} u^2+C\int_{\R^N}u^{(k+2)\frac N{N-2}}.
$$
Finally, we have for $\rho>0$ large, we obtain
\begin{equation*}
\int_{\R^N}u^{(k+2)\frac N{N-2}} \leq C_k\Big( \int_{\R^N} u^k \left| \na u\right|^2\Big)^{\frac N{N-2}} 
 \leq  C_k  \Big( \rho^k\int_{\{u\leq\rho\}}  \left| \na u\right|^2
+C\int_{\{u\geq \rho\}} a(u) \left| \na u\right|^2 \Big)^{\frac N{N-2}}.
\end{equation*}
Whence, we conclude that $v\in H^1(\R^N)$. Assume now that $u_1$ and $u_2$ are
two positive radial solutions to problem \eqref{problema} in $X\cap C^2(\R^N)$.
Then, setting $v_1:=g^{-1}(u_1)$ and $v_2:=g^{-1}(u_2)$, from the first part of the proof
we learn that $v_1,v_2\in H^1(\R^N)\cap C^2(\R^N)$ and $v_1,v_2>0$. Mimicking the proof of Proposition~\ref{llinksoll}
(argue in the reversed order), it is easy to see that $v_1$ and $v_2$ solve problem~\eqref{1.5}.
Of course, being $u_1=u_1(|x|)$ and $u_2=u_2(|x|)$, we have $v_1=v_1(|x|)$ and $v_2=v_2(|x|)$.
From the uniqueness (up to translations) for the positive radial $H^1(\R^N)\cap C^2(\R^N)$ solutions of \eqref{1.5},
it follows $v_1(\cdot)=v_2(\cdot+x_0)$, for some $x_0\in\R^N$. But then, in turn, 
$u_1(\cdot)=g(v_1(\cdot))=g(v_2(\cdot+x_0))=u_2(\cdot+x_0)$, concluding the proof.
\end{proof}

\section{General computations}

\noindent
Motivated by the conclusion of Lemma~\ref{UniqLink}, in order to get the desired uniqueness result,
in this section we study the uniqueness of positive radial solution of \eqref{1.5}. To this end, we let
$$
h(v):=\frac{|g(v)|^{p-1}g(v)-mg(v)} {\sqrt{a(g(v))}},
$$
so that we can rewrite \eqref{1.5} as $-\Delta v=h(v)$ in $\R^N$.
In this way, we have that $h(s)\leq 0$ on $(0,s_0]$, where
\begin{equation}
	\label{essezero}
s_0:=g^{-1}\big( m^{\frac 1{p-1}}\big),
\end{equation}
and $h(s)>0$ for all $s>s_0$. Since $a\in C^2(\R)$ it follows that $g\in C^3(\R)$ and $h\in C^2(0,+\infty)$.
Then, letting ${\mathscr K}_{h}(s)=\frac{sh'(s)}{h(s)}$, the uniqueness result of Serrin and Tang \cite[Theorem 1]{ST},
tells us that problem \eqref{1.5} admits a unique positive radial solution if
\begin{equation}\label{2.1}
{\mathscr K}'_{h}(s)\leq 0,\quad\text{for every $s\geq s_0$.}
\end{equation}
Observe that it holds
$$
{\mathscr K}'_{h}(s)=\frac{\left(h'(s)+sh''(s)\right)h(s)-s\left( h'(s)\right)^2}{h(s)^2}.
$$ 
Then we only need to evaluate the quantity 
$$
K(s):=\left(h'(s)+sh''(s)\right)h(s)-s\left( h'(s)\right)^2.
$$ 
We start by making some
calculations. Using (\ref{1.1}) and the fact that $g(s)>0$ for $s>0$, we have that
\begin{eqnarray}
&& h(s)=g g'\left(g^{p-1}-m\right);\nonumber\\
&&h'(s)=(p-1)g^{p-1}\left(g'\right)^2+\left(g^{p-1}-m\right)\left( \left(g'\right)^2+g g''\right);\nonumber\\
&&h''(s)=p(p-1) g^{p-2}\left(g'\right)^3+3(p-1)g^{p-1}g' g''+\left(g^{p-1}-m\right)\left( 3g'g''+g g'''\right).\nonumber
\end{eqnarray}
Then, we have
\begin{eqnarray}
&&K(s)=\left(h'(s)+sh''(s)\right) h(s)-s\left( h'(s)\right)^2\nonumber\\
&& =sg g' \left(g^{p-1}-m\right)\left[ p(p-1)g^{p-2}\left(g'\right)^3+3(p-1)g^{p-1}g' g''+\left(g^{p-1}-m\right)\left( 3g'g''+g g'''\right)\right]\nonumber\\
&&+g g' \left(g^{p-1}-m\right)\left[(p-1)g^{p-1}\left(g'\right)^2+\left(g^{p-1}-m\right)\left( \left(g'\right)^2+g g''\right)\right]\nonumber\\
&&-s\left[(p-1)g^{p-1}\left(g'\right)^2+\left(g^{p-1}-m\right)\left( \left(g'\right)^2+g g''\right)\right]^2\nonumber\\
&&= \left(g^{p-1}-m\right)^2\left[sg g' \left( 3g'g''+gg'''\right)+gg'\left(\left(g'\right)^2+g g''\right)
-s\left(\left(g'\right)^4+g^2\left(g''\right)^2+2g\left(g'\right)^2 g''\right)\right]\nonumber\\
&&+\left(g^{p-1}-m\right)\left[ sg g'\left( p(p-1)g^{p-2}\left(g'\right)^3+3(p-1)g^{p-1}g'g''\right)+(p-1)g g' g^{p-1}\left(g'\right)^2\right.\nonumber\\
&&\left.-2(p-1)sg^{p-1}\left(g'\right)^2\left(\left(g'\right)^2+g g''\right)\right]
-(p-1)^2sg^{2p-2}\left(g'\right)^4\nonumber\\
&&=\left(g^{p-1}-m\right)^2\left[sg^2g'g'''+sg\left(g'\right)^2 g''-s\left(g'\right)^4-sg^2\left(g''\right)^2+g\left(g'\right)^3+g^2g'g''\right]\nonumber\\
&&+\left(g^{p-1}-m\right)(p-1)g^{p-1}\left(g'\right)^2\left[(p-2) s\left(g'\right)^2+sgg''+gg'\right]
-(p-1)^2sg^{2p-2}\left(g'\right)^4.\nonumber
\end{eqnarray}
Now, using $g''=-\frac 12 a'\left(g\right)\left(g'\right)^4$ and
$g'''=-\frac 12 a''\left(g\right) \left( g'\right)^5+\left( a'\left(g\right)\right)^2 \left( g'\right)^7$,
we have
\begin{align*}
%& \left(h'(s)+sh''(s)\right) h(s)-s\left( h'(s)\right)^2 \\
K(s)&=\left(g^{p-1}-m\right)^2\left[sg^2g'\left(   -\frac 12 a''\left(g\right) \left( g'\right)^5+\left( a'\left(g\right)\right)^2 \left( g'\right)^7  \right)
+sg\left(g'\right)^2\left(-\frac 12 a'\left(g\right)\left(g'\right)^4\right)\right.\\
&\left. -s\left(g'\right)^4-sg^2\left( -\frac 12 a'\left(g\right)\left(g'\right)^4\right)^2+g\left(g'\right)^3+g^2g'\left( -\frac 12 a'\left(g\right)\left(g'\right)^4\right)\right] \\
&+\left(g^{p-1}-m\right)(p-1)g^{p-1}\left(g'\right)^2\left[(p-2) s\left(g'\right)^2+sg \left(-\frac 12 a'\left(g\right)\left(g'\right)^4\right)+gg'\right]
-(p-1)^2sg^{2p-2}\left(g'\right)^4 \\
&=\left(g^{p-1}-m\right)^2\left(g'\right)^3\left[-\frac 12 a''(g) s g^2 \left(g'\right)^3+\left(a'(g)\right)^2sg^2 \left(g'\right)^5 -\frac 12 a'(g)sg\left(g'\right)^3\right.\\
&\left.-sg'-\frac 14 \left( a'(g)\right)^2 sg^2 \left(g'\right)^5+g-\frac 12 a'(g) g^2\left(g'\right)^2\right]   \\
&+\left(g^{p-1}-m\right)(p-1)g^{p-1}\left(g'\right)^3\left[(p-2)sg' -\frac 12 a'(g)sg\left(g'\right)^3+g\right]
-(p-1)^2sg^{2p-2}\left(g'\right)^4 .
\end{align*}
As $(g^{p-1}-m)g^{p-1}=(g^{p-1}-m)^2+m(g^{p-1}-m)$ and $g^{2p-2}=(g^{p-1}-m)^2+2m(g^{p-1}-m)+m^2$,
\begin{align*}
%&\left(h'(s)+sh''(s)\right) h(s)-s\left( h'(s)\right)^2 \\
&K(s)=\left(g^{p-1}-m\right)^2\left(g'\right)^3\left[-\frac 12 a''(g) s g^2 \left(g'\right)^3+\frac 34\left(a'(g)\right)^2sg^2 \left(g'\right)^5 -\frac 12 a'(g)sg\left(g'\right)^3\right.\\
&\left.-sg'+g-\frac 12  a'(g) g^2 \left(g'\right)^2\right] \\
&+\left(g^{p-1}-m\right)^2(p-1)\left(g'\right)^3\left[(p-2)sg' -\frac 12 a'(g)sg\left(g'\right)^3+g\right] \\
&+m\left(g^{p-1}-m\right)(p-1)\left(g'\right)^3\left[(p-2)sg' -\frac 12 a'(g)sg\left(g'\right)^3+g\right]   \\
&-\left( \left(g^{p-1}-m\right)^2 +2m \left(g^{p-1}-m\right)+m^2\right)   (p-1)^2s\left(g'\right)^4 \\
&=\left(g^{p-1}-m\right)^2\left(g'\right)^3\left[-\frac 12 a''(g) s g^2 \left(g'\right)^3+\frac 34\left(a'(g)\right)^2sg^2
\left(g'\right)^5 -\frac 12 a'(g)sg\left(g'\right)^3-sg'+g\right.\\
&-\frac 12  a'(g) g^2 \left(g'\right)^2\left.+(p-1)(p-2)sg'-\frac 12 (p-1)a'(g) s g \left(g'\right)^3+(p-1)g -s (p-1)^2 g'\right] \\
&+m\left(g^{p-1}-m\right)(p-1)\left(g'\right)^3\left[(p-2)sg' -\frac 12 a'(g)sg\left(g'\right)^3+g-2 (p-1)s g'\right]
-m^2   (p-1)^2s\left(g'\right)^4 \\
&=\left(g^{p-1}-m\right)^2\left(g'\right)^3\left[-\frac 12 a''(g) s g^2 \left(g'\right)^3+\frac 34\left(a'(g)\right)^2sg^2 \left(g'\right)^5
-\frac 12 a'(g)g\left(g'\right)^2\left(ps g'+g\right)
-psg'+pg\right] \\
&+m\left(g^{p-1}-m\right)(p-1)\left(g'\right)^3\left[ -\frac 12 a'(g)sg\left(g'\right)^3+g-ps g'\right]
-m^2   (p-1)^2s\left(g'\right)^4  \\
&=\left(g'\right)^3\left[\left(g^{p-1}-m\right)^2{\mathscr H}_1(s)+m(p-1)\left(g^{p-1}-m\right){\mathscr H}_2(s)-m^2s(p-1)^2 g'\right]
\end{align*}
where we have set
\begin{align}
	\label{H1}
&{\mathscr H}_1(s):=-\frac 12 a''(g) s g^2 \left(g'\right)^3
+\frac 34\left(a'(g)\right)^2sg^2 \left(g'\right)^5 -\frac 12 a'(g)g\left(g'\right)^2\left(ps g'+g\right)-psg'+pg	 \\
\label{H2}
& {\mathscr H}_2(s):= -\frac 12 a'(g)sg\left(g'\right)^3+g-ps g'.
\end{align}
Notice that, since $g'>0$, we obtain
\begin{align}
	\label{formulaK}
K(s) & =m^2(g')^3\left[ \left( \frac {g^{p-1}}m-1 \right)^2{\mathscr H}_1(s)+(p-1)\left( \frac {g^{p-1}}m-1\right){\mathscr H}_2(s)-s(p-1)^2 g'\right] \\
&<  m^2(g')^3 \left( \frac {g^{p-1}}m-1 \right)\left[ \left( \frac {g^{p-1}}m-1 \right) {\mathscr H}_1(s)+(p-1){\mathscr H}_2(s)\right]. \notag
\end{align}
In particular, in order for $K$ to be asymptotically negative, it is sufficient to find $s_1>0$ and $s_2>0$
depending on the data of the problem, except on the value of $m>0$, such that
$$
{\mathscr H}_1(s)\leq 0,\quad\text{for every $s\geq s_1$},\qquad\,
{\mathscr H}_2(s)\leq 0,\quad\text{for every $s\geq s_2$}.
$$
Then one chooses the values of $m$ large enough that $s_0$ (cf.~\eqref{essezero}) becomes greater than $\max\{s_1,s_2\}$.

\section{Proof of Theorems~\ref{main} and~\ref{mainbis}}
Under the assumptions of Theorems~\ref{main} and~\ref{mainbis},
we now set $a(s)=a_0(s):=a_1|s|^k +\psi(s)$, where $k,a_1>0$ and $\psi(s)\in C^2(\R)$ is
bounded below away from zero. Observe that from \eqref{k-mag-2} in Theorem~\ref{main} 
and from \eqref{k-min-2} in Theorem~\ref{mainbis} it follows that 
$\psi(s)=o(s^k)$ as $s\to+\infty$ so that \eqref{1.7} is satisfied. 
In the case $0<k\leq 2$, \eqref{k-min-2} implies that 
$\lim\limits_{s\to +\infty}k\psi(s)-s\psi'(s)= k \lim\limits_{s\to +\infty}\psi(s)>0$.

\subsection{Asymptotic estimates}
\noindent
We have
$$
a_0(g_0)=a_1g^k_0+\psi(g_0),\quad
a'_0(g_0)=ka_1 g_0^{k-1}+\psi'(g_0),\quad
a''_0(g_0)=k(k-1)a_1g_0^{k-2}+\psi''(g_0),
$$
where $g_0$ is the function satisfying the Cauchy problem
$$
g'_0(s)=\frac 1{\sqrt{a_1g^k_0(s)+\psi(g_0)}},  \,\,\,\quad g_0(0)=0.
$$
In particular, $a_1g^{k}_0=\frac 1{(g'_0)^2}-\psi(g_0)$. We have the following simple result.

\begin{lemma}
	\label{segnolemma}
Assume that $\psi$ satisfies $\psi(s)=o(s^k)$ as $s\to+\infty$, $k\psi(s)-s\psi'(s)\geq 0$ for all $s\geq 0$ %and $k>0$ 
and  $k\psi(s)-s\psi'(s)\to\a$ as $s\to+\infty$ for some $\a>0$ in the case $0<k\leq 2$.
Consider the function $G_0:\R^+\to\R$ defined by setting
$$
G_0(s)=s-\frac2{k+2}\frac {g_0}{g_0'}=s-\frac2{k+2}g_0\sqrt{a_0(g_0)}.
$$
Then, for any $k>0$, $G_0$ is nondecreasing, 
$G_0(s)\geq 0$ for every $s\geq 0$ and $G_0(s)>0$ eventually for $s>0$ large.
Moreover, we have the limit
\begin{equation}\label{4.2}
\lim_{s\to +\infty} \frac {s(g_0')^2} {G_0(s)}=\left\{ \begin{array}{ll}
0 & \text{if  $k\geq 2$,}\\
\frac {2-k}{\a} &\text{if  $0<k< 2$}.
\end{array}
\right.
\end{equation}
\end{lemma}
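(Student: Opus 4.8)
The plan is to treat the three assertions in order, obtaining the monotonicity and sign of $G_0$ by a direct differentiation, and then reading off the two cases of the limit \eqref{4.2} from sharp second–order asymptotics obtained through the inverse substitution $t=g_0(s)$.

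\textbf{Step 1 (monotonicity and sign).} I would differentiate $G_0(s)=s-\frac{2}{k+2}g_0\sqrt{a_0(g_0)}$, using $g_0'=1/\sqrt{a_0(g_0)}$ (so $g_0'\sqrt{a_0(g_0)}=1$ and $\frac{g_0'}{\sqrt{a_0(g_0)}}=(g_0')^2$). After simplification this should give
$$
G_0'(s)=\frac{1}{k+2}\,\frac{k\psi(g_0)-g_0\psi'(g_0)}{a_0(g_0)}=\frac{(g_0')^2}{k+2}\big(k\psi(g_0)-g_0\psi'(g_0)\big).
$$
Since $g_0(s)\geq 0$ for $s\geq 0$ and $k\psi(s)-s\psi'(s)\geq 0$ by hypothesis, while $a_0(g_0)>0$ by coercivity, this yields $G_0'\geq 0$; together with $G_0(0)=0$ (as $g_0(0)=0$) it gives that $G_0$ is nondecreasing and $G_0\geq 0$. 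For eventual strict positivity I note that $G_0\equiv 0$ would force $k\psi(t)-t\psi'(t)=0$ for every $t\geq 0$ (because $g_0$ is a bijection of $[0,\infty)$), hence $\psi(t)=ct^k$, contradicting simultaneously $\psi(s)=o(s^k)$ and $\inf_{s\geq 0}\psi>0$. Thus $G_0(\bar s)>0$ for some $\bar s$, and monotonicity gives $G_0(s)\geq G_0(\bar s)>0$ for all $s\geq \bar s$.

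\textbf{Step 2 (the limit).} I would pass to the variable $t=g_0(s)$, so that $s=g_0^{-1}(t)=\int_0^t\sqrt{a_0(\tau)}\,d\tau$ and
$$
s(g_0')^2=\frac{g_0^{-1}(t)}{a_0(t)},\qquad G_0(s)=F(t):=g_0^{-1}(t)-\frac{2}{k+2}\,t\sqrt{a_0(t)},\qquad F'(t)=\frac{k\psi(t)-t\psi'(t)}{(k+2)\sqrt{a_0(t)}}.
$$
From \eqref{1.7}, \eqref{1.8} one has $a_0(t)\sim a_1 t^k$ and $g_0^{-1}(t)\sim\frac{2\sqrt{a_1}}{k+2}t^{(k+2)/2}$, whence $s(g_0')^2\sim\frac{2}{(k+2)\sqrt{a_1}}\,t^{(2-k)/2}$. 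For $k>2$ the numerator $s(g_0')^2\to 0$ by \eqref{limnum}, while $G_0$ is bounded below by a positive constant by Step 1, so the ratio tends to $0$. For $0<k\leq 2$ I use \eqref{k-min-2}, which gives $k\psi(t)-t\psi'(t)\to\a>0$, hence $F'(t)\sim\frac{\a}{(k+2)\sqrt{a_1}}t^{-k/2}$; integrating (equivalently, by L'Hôpital applied to $F(t)/t^{(2-k)/2}$, resp. $F(t)/\ln t$) yields $F(t)\to+\infty$, with $F(t)\sim\frac{\a}{(k+2)\sqrt{a_1}}\ln t$ when $k=2$ and $F(t)\sim\frac{2\a}{(k+2)(2-k)\sqrt{a_1}}t^{(2-k)/2}$ when $0<k<2$. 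In the case $k=2$ the numerator stays bounded (it tends to $\frac{1}{a_1c_1^2}$ by \eqref{limnum}) while $F\to+\infty$, so the ratio is $0$; in the case $0<k<2$ both numerator and denominator are of order $t^{(2-k)/2}$, and dividing the two explicit coefficients produces exactly $\frac{2-k}{\a}$.

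\textbf{Main obstacle.} The crux is that $G_0$ is a difference whose two summands, $s$ and $\frac{2}{k+2}g_0\sqrt{a_0(g_0)}$, share the same leading asymptotics $\frac{2\sqrt{a_1}}{k+2}t^{(k+2)/2}$ in the variable $t$, so that $G_0$ is genuinely of lower order and its growth is dictated entirely by the perturbation $\psi$ through the combination $k\psi-s\psi'$. Capturing the sharp constant in \eqref{4.2} therefore hinges on the exact second–order behaviour, which is precisely where the limit hypothesis $k\psi-s\psi'\to\a$ of \eqref{k-min-2} enters decisively; the borderline logarithmic divergence at $k=2$ must be isolated carefully so that it is absorbed into the ``$k\geq 2$'' case.
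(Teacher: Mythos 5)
Your argument is correct, and its first half coincides with the paper's: you differentiate $G_0$, arrive at the same identity $G_0'=\tfrac{1}{k+2}\,\tfrac{k\psi(g_0)-g_0\psi'(g_0)}{a_0(g_0)}$, and deduce monotonicity and nonnegativity; in fact you are more explicit than the paper about why $G_0$ is \emph{eventually strictly} positive when $k>2$ (the paper only says this ``readily'' follows, whereas your contradiction via $\psi(t)=ct^k$ supplies the missing justification). Where you genuinely diverge is in the computation of the limit \eqref{4.2}. The paper stays in the variable $s$ and applies L'H\^opital to $s(g_0')^2/G_0(s)$ directly, which forces it through a fairly long chain of manipulations with $g_0''=-\tfrac12 a_0'(g_0)(g_0')^4$, the identity $a_1g_0^k=1/(g_0')^2-\psi(g_0)$, and the auxiliary limit $sg_0'/g_0\to 2/(k+2)$. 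Your substitution $t=g_0(s)$ replaces all of that by the one-line formula $F'(t)=\tfrac{k\psi(t)-t\psi'(t)}{(k+2)\sqrt{a_0(t)}}$, after which both numerator and denominator have explicit leading-order asymptotics in $t$ (via \eqref{1.8a} and \eqref{limnum}) and the constant $\tfrac{2-k}{\a}$ drops out by dividing coefficients; the $k=2$ logarithmic borderline is handled the same way in both proofs (denominator $\to+\infty$, numerator bounded). The trade-off is transparency versus self-containedness: your route makes the role of $k\psi-t\psi'$ as the exact derivative of the (transformed) denominator immediately visible and shortens the computation considerably, while the paper's in-$s$ computation avoids invoking the asymptotics of $g_0^{-1}$ and keeps everything in the notation used later for ${\mathscr H}_1$ and ${\mathscr H}_2$. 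One small point to make airtight: when you pass from $F'(t)\sim Ct^{-k/2}$ to $F(t)\sim \tfrac{2C}{2-k}t^{(2-k)/2}$ you should say explicitly that this is L'H\^opital applied to $F(t)/t^{(2-k)/2}$ (as you parenthetically do), since an asymptotic for $F'$ alone does not in general integrate to an asymptotic for $F$ without the divergence of the integral, which here is guaranteed by $\a>0$.
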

\begin{proof}
We have $G_0(0)=0$ and, in addition, for every $s\geq 0$ it holds
\begin{align*}
G'_0(s)&=1-\frac2{k+2}-\frac 1{k+2} g_0\frac{a_1kg_0^{k-1}+\psi'(g_0)}{a_1g_0^k+\psi(g_0)} \\
&=\frac1{k+2}\frac 1{a_1g_0^k(s)+\psi(g_0)}\left[k a_1g_0^k(s)+k\psi(g_0)-ka_1g_0^k(s)-\psi'(g_0)g_0\right]
=\frac1{k+2}\frac {k\psi(g_0)-\psi'(g_0)g_0}{a_1g^k_0(s)+\psi(g_0)},
\end{align*}
which readily yields the first assertion. Then from \eqref{limnum} and recalling that 
$G_0$ is an increasing function, we obtain conclusion \eqref{4.2} for $k>2$. Moreover, 
in the cases $0<k\leq 2$, we can write
\begin{align*}
G'_0(s)&= \frac{1}{k+2}\frac {   k\psi(g_0)-\psi'(g_0)g_0       }{a_1g^k_0(s)\Big(1+\dfrac {\psi(g_0)}{a_1 g^k_0}\Big)}=
\frac{1}{k+2}\frac {  k\psi(g_0)-\psi'(g_0)g_0     }{a_1\Big(\dfrac {g_0(s)} {c_1s^{\frac 2{k+2}}}\Big)^k c_1^k s^{\frac {2k}{k+2}}\Big(1+\dfrac {\psi(g_0)}{a_1 g^k_0}\Big)}.
\end{align*}
From \eqref{1.8}, we get
$G'_0(s)\geq  c s^{-\frac {2k}{k+2}}$
for $s>0$ large and some $c>0$. Then, if $0<k< 2$,
$G_0(s)-C_0> c's^{(2-k)/(k+2)}$ for $s>0$ large and $c',C_0\in \R$ with $c'>0$. In the case $k=2$, we have
$G_0(s)-C_0>c' \log s$ for any $s>0$ large. Taking the limits as $s\to +\infty$ 
 we finally get, for any $0<k\leq 2$,
\begin{equation}\label{4.1}
\lim_{s\to +\infty} G_0(s)=
+\infty .
\end{equation}
Then, if $k=2$, \eqref{limnum} and \eqref{4.1} yield $\frac{s(g'_0)^2}{G_0(s)}\to 0$ as $s\to+\infty$.
If, instead, $0<k<2$ from \eqref{4.1} we have
\begin{align*}
\lim_{s\to +\infty}  \frac{s(g'_0)^2}{G_0(s)}  &=\lim_{s\to +\infty}\frac {(g'_0)^2+2sg'_0g''_0 }{G'_0(s)} \\
&=(k+2)\lim_{s\to +\infty}\frac{(g'_0)^2+2sg'_0\left( -\frac12  (ka_1g_0^{k-1}+\psi'(g_0))(g_0')^4\right)}{k\psi(g_0)-\psi'(g_0)g_0}(a_1g_0^{k}+\psi(g_0))\\
&=(k+2)\lim_{s\to +\infty}  \frac{1-s(ka_1g_0^{k-1}+\psi'(g_0))(g_0')^3}{k\psi(g_0)-\psi'(g_0)g_0}\\
&=(k+2) \lim_{s\to +\infty} \frac {1-k\dfrac s{g_0}\left(\frac 1{(g_0')^2}-\psi(g_0) \right)(g_0')^3-s\psi'(g_0)(g_0')^3 }{k\psi(g_0)-\psi'(g_0)g_0}  \\
&=(k+2) \lim_{s\to +\infty}\frac{1-k\dfrac {sg_0'}{g_0}+  \dfrac{s(g_0')^3}{g_0}\Big(k\psi(g_0) -g_0\psi'(g_0)\Big)  }{ k\psi(g_0)-\psi'(g_0)g_0 }.
\end{align*}
Moreover, we have
\begin{equation*}
\lim_{s\to +\infty}  \frac{sg'_0}{g_0}=\lim_{s\to +\infty}  \frac s{\sqrt {a_1}g_0^{1+\frac k2}}\\
=\lim_{s\to +\infty}\frac 1{\sqrt{a_1}} \frac s{ \Big( \dfrac {g_0}{c_1 s^{\frac 2{k+2}}}\Big)^{\frac {k+2}2} c_1^{\frac {k+2}2} s }=\frac 1{\sqrt{a_1}}\frac 1{c_1^{\frac {k+2}2}} =\frac 2{k+2}.
\end{equation*}
Taking into account that $k\psi(s)-s\psi'(s)\to \a>0$ as $s\to+\infty$, for $0<k<2$ we conclude
\begin{equation*}
\lim_{s\to +\infty}  \frac{s(g'_0)^2}{G_0(s)}  =\frac {k+2}{\a}\Big( 1-k \frac 2{k+2}\Big)=\frac{2-k}{\a}.
\end{equation*}
This ends the proof.
\end{proof}

\noindent
Let us now set, for each $s>0$ large, 
\begin{align*}
& {\mathscr H}_1(s):=-\frac 12 a''_0(g_0) s g^2_0 \left(g'_0\right)^3
+\frac 34\left(a'_0(g_0)\right)^2sg^2_0 \left(g'_0\right)^5 -\frac 12 a'_0(g_0)g_0\left(g'_0\right)^2\left(ps g'_0+g_0\right)-psg'_0+pg_0,	 \\
& {\mathscr H}_2(s):= -\frac 12 a'_0(g_0)sg_0\left(g'_0\right)^3+g_0-ps g'_0.
\end{align*}

\subsection{Sign of the term ${\mathscr H}_1$}

\noindent
Concerning the term ${\mathscr H}_1$, we have
\begin{align*}
{\mathscr H}_1(s)&=-\frac 12 \left[a_1 k(k-1) g_0^{k-2}+\psi''(g_0)\right]sg_0^2(g_0')^3+\frac 34 \left[a_1^2 k^2 g_0^{2k-2}+(\psi'(g_0))^2+2a_1kg_0^{k-1}\psi'(g_0)\right]sg_0^2 (g_0')^5  \\
&-\frac 12\left[a_1k g_0^{k-1}+\psi'(g_0) \right]g_0(g_0')^2(psg_0'+g_0)-psg_0'+pg_0   \\
&=-\frac 12 k(k-1)(a_1g_0^k)s(g_0')^3-\frac 12 sg_0^2(g_0')^3 \psi''(g_0)+\frac 34 k^2(a_1^2g_0^{2k})s(g_0')^5 +\frac 34 sg_0^2(g_0')^5(\psi'(g_0))^2 \\
&+\frac 32 k(a_1g_0^k) sg_0(g_0')^5 \psi'(g_0)-\frac 12 k(a_1g_0^k)(g_0')^2(psg_0'+g_0)-\frac 12 g_0 (g_0')^2 (psg_0'+g_0)\psi'(g_0)-psg_0'+pg_0  \\
&=-\frac 12 k(k-1)\Big(  \frac 1{(g_0')^2}-\psi(g_0)    \Big)s(g_0')^3-\frac 12 sg_0^2(g_0')^3 \psi''(g_0) \\
&+\frac 34 k^2\Big( \frac 1{(g_0')^4}+\psi^2(g_0)-2\frac {\psi(g_0)} {(g_0')^2}\Big)s(g_0')^5 +\frac 34 sg_0^2(g_0')^5(\psi'(g_0))^2 \\
&+\frac 32 k\Big(  \frac 1{(g_0')^2}-\psi(g_0) \Big) sg_0(g_0')^5 \psi'(g_0)-\frac 12 k\Big(  \frac 1{(g_0')^2}-\psi(g_0)  \Big)(g_0')^2(psg_0'+g_0)  \\
&-\frac 12 g_0 (g_0')^2 (psg_0'+g_0)\psi'(g_0)-psg_0'+pg_0  \\
&= -\frac12 k(k-1) sg_0'+\frac 34 k^2sg_0' -\frac 12k(psg_0'+g_0)-psg_0'+pg_0   \\
&+ \frac 12 k(k-1)s(g_0')^3\psi(g_0)-\frac 12 sg_0^2(g_0')^3 \psi''(g_0)+\frac 34 k^2 s(g_0')^5 \psi^2(g_0)-\frac 32 k^2s(g_0')^3\psi(g_0)+\frac 34 sg_0^2(g_0')^5 (\psi'(g_0))^2 \\
&+\frac 32ksg_0(g_0')^3\psi'(g_0)-\frac 32 k sg_0(g_0')^5 \psi(g_0)\psi'(g_0)+\frac 12 k(g_0')^2 (psg_0'+g_0)\psi(g_0)-\frac 12 g_0(g_0')^2(psg_0'+g_0)\psi'(g_0) \\
&=\frac 14\left(k^2+2k(1-p)-4p\right)sg_0'+\frac 12 (2p-k)g_0  \\
&+\frac 12 k \psi(g_0) s(g_0')^3(k-1-3k+p)+\frac 12 k g_0(g_0')^2 \psi(g_0)+\frac 34 k^2 s(g_0')^5 \psi^2(g_0)+{\mathscr R}_1(\psi',\psi''),  
\end{align*}
where we have set
\begin{align*}
{\mathscr R}_1(\psi',\psi'')&:=-\frac 12 sg_0^2(g_0')^3 \psi''(g_0)+\frac 34 sg_0^2(g_0')^5 (\psi'(g_0))^2+\frac 32 ksg_0(g_0')^3 \psi'(g_0)\\
&-\frac 32 ks g_0(g_0')^5\psi(g_0)\psi'(g_0)-\frac 12 g_0(g_0')^2(psg_0'+g_0)\psi'(g_0).
\end{align*}
Then
\begin{align*}
{\mathscr H}_1(s)&=\frac 14 (k+2)(k-2p)\left( sg_0'-\frac 2{k+2}g_0\right) \\
&+ \frac 12 k(p-2k-1) s(g_0')^3\psi(g_0) +\frac 12 kg_0(g_0')^2\psi(g_0)+\frac 34 k^2 s(g_0')^5 \psi^2(g_0)+{\mathscr R}_1(\psi',\psi'') \\
&=\frac 14 (k+2)(k-2p)\left( sg_0'-\frac 2{k+2}g_0\right) \\
&+ \frac 12 k (g_0')^2\psi(g_0) \left( (p-2k-1)sg_0'+g_0\right)+\frac 34 k^2 s(g_0')^5 \psi^2(g_0)+{\mathscr R}_1(\psi',\psi'') 
\end{align*}
and, since $G_0(s)\geq 0$ for all $s\geq 0$ by Lemma~\ref{segnolemma}, for every $s>0$ we get
\begin{align*}
{\mathscr H}_1(s)&\leq \frac 14 (k+2)(k-2p)g_0'G_0(s)+\frac 12 k(g_0')^2\psi(g_0)\left[(p-2k-1)sg_0'+\frac {k+2}2 sg_0' \right] \\
&+\frac 34 k^2s(g_0')^5\psi^2(g_0)+{\mathscr R}_1(\psi',\psi'') \\
&=\frac 14 (k+2)(k-2p)g_0'G_0(s)+\frac 14 k(g_0')^2\psi(g_0)(2p-3k)sg_0'+\frac 34 k^2s(g_0')^5\psi^2(g_0)+{\mathscr R}_1(\psi',\psi'') \\
&=\frac 14 g_0'G_0(s)\left[ (k+2)(k-2p)  + (2p-3k) k\frac{s(g_0')^2\psi(g_0)}{G_0(s)}+3k^2 \frac{s(g_0')^4\psi^2(g_0)}{G_0(s)} +4\frac{   {\mathscr R}_1(\psi',\psi'')   }{g_0'G_0(s)}     \right].
\end{align*}
We shall now distinguish two cases.
\vskip3pt
\noindent
{\sc\bf Case I:} ($k>2$). 
In this case, exploiting assumptions \eqref{k-mag-2}, taking into account that (cf.\ Lemma~\ref{l2.3}),
\begin{equation}
	\label{asbehav}
s(g_0')^2=O\big(g_0^{\frac{2-k}{2}}\big),\quad g'_0=O\big(g^{-\frac{k}{2}}_0\big),\quad
\text{as $s\to+\infty$}, 
\end{equation}
and recalling that $G_0$ is nondecreasing by virtue of Lemma~\ref{segnolemma}, we have
\begin{equation*}
	\lim_{s\to +\infty}\frac {s(g'_0(s))^2} {G_0(s)}\psi(g_0)=0,
	\qquad
	\lim_{s\to +\infty}\frac {s(g'_0(s))^4} {G_0(s)}\psi^2(g_0)=0,
	\qquad
	\lim_{s\to +\infty}\frac{   {\mathscr R}_1(\psi',\psi'')   }{g_0'G(s)}=0.
\end{equation*}	
Whence, since $p>k/2$, given $\eps\in (0,1)$ there exists $s_1>0$
depending upon $a$ and $k$ (but independent upon the value of $m$), such that
$$
k \big(2p-3k\big)\frac {s(g'_0(s))^2} {G_0(s)}\psi(g_0)+3 k^2\frac {s(g'_0(s))^4} {G_0(s)}\psi^2(g_0)
+4\frac{   {\mathscr R}_1(\psi',\psi'')   }{g_0'G(s)}\leq -\eps (k+2)(k-2p),\qquad\text{for every $s\geq s_1$.}
$$
Recalling Lemma~\ref{segnolemma} ($G_0(s)>0$ eventually for $s$ large), since $p>\frac{k}{2}$, it follows that
$$
{\mathscr H}_1(s) \leq \frac{1-\eps}{4}(k+2)(k-2p)g'_0(s)G_0(s)<0,
\qquad\text{for every $s\geq s_1$.}
$$
\vskip3pt
\noindent
{\sc\bf Case II:} ($0<k\leq 2$). In this case, notice that $p>1\geq k-1$ and by virtue of~\eqref{4.2} of Lemma~\ref{segnolemma}
$$
\lim_{s\to+\infty}k(2p-3k)\frac {s(g'_0(s))^2} {G_0(s)}\psi(g_0)
=\lim_{s\to+\infty} k(2p-3k)\psi(g_0)\cdot \lim_{s\to+\infty}\frac {s(g'_0(s))^2} {G_0(s)}=(2p-3k)(2-k),
$$
since, in the notations of Lemma~\ref{segnolemma}, and recalling
assumptions \eqref{k-min-2}, it holds $\a= k\psi_\infty$, where 
here we have set $\psi_\infty=\lim_{s\to+\infty} \psi(s)\in\R^+$. Of course, in turn,
$$
\lim_{s\to+\infty} \frac {s(g'_0(s))^4} {G_0(s)}\psi^2(g_0)=0.
$$
Finally, by arguing as for the previous case, taking into account assumptions~\eqref{k-min-2} we have again
$$
\lim_{s\to +\infty}\frac{   {\mathscr R}_1(\psi',\psi'')   }{g_0'G(s)}=0.
$$
In conclusion, there exists $\tilde s_1>0$
depending upon $a$ and $k$ (but independent upon $m$), with
$$
k \big(2p-3k\big)\frac {s(g'_0(s))^2} {G_0(s)}\psi(g_0)+3 k^2\frac {s(g'_0(s))^4} {G_0(s)}\psi^2(g_0)
+4\frac{   {\mathscr R}_1(\psi',\psi'')   }{g_0'G(s)}
\leq (2p-3k)(2-k)-4\eps k(k-p-1),
$$
for every $s\geq \tilde s_1$. In turn, we get
$$
{\mathscr H}_1(s)\leq (1-\eps)k(k-p-1)g'_0(s)G_0(s)<0,
\qquad\text{for every $s\geq \tilde s_1$.}
$$
In conclusion, in any case, ${\mathscr H}_1$ becomes negative for values of $s>0$ sufficiently large.

\subsection{Sign of the term ${\mathscr H}_2$}

\noindent
Concerning the term \eqref{H2}, setting
$$
{\mathscr R}_2(\psi'):= -\frac 12 \psi'(g_0)s g_0 (g'_0)^3,
$$
we have
\begin{align*}
{\mathscr H}_2(s) &=-\frac 12 a'_0(g_0)s g_0 (g'_0)^3+g_0-psg'_0 \\
& =-\frac 12 ka_1g^k_0 s(g'_0)^3+g_0-psg'_0 +{\mathscr R}_2(\psi') \\
&=-\frac 12 k \Big( \frac 1{(g'_0)^2}-\psi(g_0)\Big)s(g'_0)^3+g_0-psg'_0+{\mathscr R}_2(\psi') \\
&=g_0-\frac 12 (k+2p)sg'_0+\frac12 ks(g'_0)^3\psi(g_0)+{\mathscr R}_2(\psi'). 
\end{align*}
Then, in light of Lemma~\ref{segnolemma}, we get for $s>0$
\begin{align*}
{\mathscr H}_2(s)&\leq \Big(\frac{k+2}{2}-\frac 12 (k+2p)\Big)sg'_0+\frac12 ks(g'_0)^3\psi(g_0)+{\mathscr R}_2(\psi') \\
&=(1-p)sg'_0+\frac12 ks(g'_0)^3\psi(g_0)+{\mathscr R}_2(\psi') \\
&=sg'_0\Big[(1-p)+\frac{k(g'_0)^2\psi(g_0)}{2}+\frac{{\mathscr R}_2(\psi')}{sg_0'}\Big].
\end{align*}
Taking into account assumption \eqref{k-mag-2} (for $k>2$) and \eqref{k-min-2} (for $0<k\leq 2$), and observing 
again that \eqref{asbehav} holds, it is readily verified that it always holds
$$
\lim_{s\to+\infty}(g'_0)^2\psi(g_0)=0,\qquad
\lim_{s\to+\infty}\frac{{\mathscr R}_2(\psi')}{sg'_0}=0.
$$
Hence, since $p>1$,
there exists $s_2>0$, depending only upon $a$ and $k$, such that
${\mathscr H}_2(s)<0$ for $s\geq s_2$.

\begin{remark}\rm
By virtue of further manipulations of the terms ${\mathscr H}_1$ and ${\mathscr H_2}$, 
in the case $0<k<2$, it is possible to relax assumptions \eqref{general-ip-1}-\eqref{k-min-2} by requiring that
the following conditions hold:
\begin{align*}
&  k\psi(s)-\psi'(s)s\geq 0,\quad\text{for all $s\geq 0$},\qquad
\liminf_{s\to+\infty} k\psi(s)-\psi'(s)s\geq\alpha>0, \\
&  \lim_{s\to+\infty} \frac{k\psi(s)-\psi'(s)s}{s^k}=0,\qquad
\lim_{s\to+\infty} \frac{(k-1)s\psi'(s)-s^2\psi''(s)}{k\psi(s)-\psi'(s)s}<\frac{2k}{2-k}(p+1-k).
\end{align*}
\end{remark}

\subsection{Proof of Theorems~\ref{main} and~\ref{mainbis}  concluded}
By the previous steps, there exists
$$
\bar s=\max\{s_1,s_2\}>0,
$$
depending upon $a$ but independent upon $m>0$,
such that ${\mathscr H}_1(s)< 0$ and ${\mathscr H}_2(s)<0$ 
for any $s\geq \bar s$. Then, recalling that
$s_0:=g^{-1}( m^{1/(p-1)})$ (see formula~\eqref{essezero}), it is
sufficient to take $m_0$ sufficiently large that $s_{0}\geq \bar s$
for every $m\geq m_0$, so that (see formula~\eqref{formulaK})
$K(s)\leq 0$ for every $s\geq s_0$, yielding (condition \eqref{2.1} if fulfilled) the desired uniqueness
for problem \eqref{1.5}, and in turn for the original problem \eqref{problema}
through Lemma~\ref{UniqLink}.

\bigskip

\end{document}